\DeclareSymbolFont{letters}{OML}{cmm}{m}{it}
\DeclareMathAlphabet{\mathcal}{OMS}{cmsy}{m}{n}
\newtheorem{theorem}{Theorem}[section]
\newtheorem{defn}[theorem]{Definition}
\newtheorem{lem}[theorem]{Lemma}
\newtheorem{prop}[theorem]{Proposition}
\newtheorem{fact}[theorem]{Fact}
\newtheorem{exmp}[theorem]{Example}
\newenvironment{clm}[1][]{%
\par\addvspace{6pt}
\noindent
\textbf{Claim #1:}
\noindent
}
{\par\addvspace{4pt}}
\newenvironment{proofclaim}[1][]{%
\par
\noindent
\emph{Proof of Claim #1.}
\noindent
}
{\hfill$\dashv$\par\addvspace{6pt}}
\newcolumntype{C}[1]{>{\centering\arraybackslash}p{#1}}
\newcommand{\D}{\ensuremath{\mathcal{D}}\xspace}
\newcommand{\Ind}{\ensuremath{\mathcal{I}}\xspace}
\newcommand{\dep}{\ensuremath{\mathop{=\!}}\xspace}
\newcommand{\sor}{\ensuremath{\cor}\xspace}
\newcommand{\cn}{\ensuremath{\mathop{\dot\sim}}\xspace}
\newcommand{\cor}{\ensuremath{\vee}\xspace}
\newcommand{\ior}{\ensuremath{\mathbin{\rotatebox[origin=c]{-90}{$\geqslant$}}}\xspace}
\newcommand{\bigior}{\ensuremath{\mathbin{{\large\rotatebox[origin=c]{-90}{$\geqslant$}}}}\xspace}
\newcommand{\logicFont}[1]{\mathcal{#1}}
\newcommand{\halfliteral}[1]{\protect\ensuremath{#1}}
\newcommand{\literal}[1]{\halfliteral{#1}\xspace}
\newcommand{\set}[3][]{\literal{\left\{#2\;\middle|\;\ifthenelse{\equal{#1}{}}{\text{#3}}{\parbox{#1}{#3}}\right\}}}
\DeclareMathSymbol{\Gamma}{\mathalpha}{operators}{0}
\DeclareMathSymbol{\Delta}{\mathalpha}{operators}{1}
\DeclareMathSymbol{\Theta}{\mathalpha}{operators}{2}
\DeclareMathSymbol{\Lambda}{\mathalpha}{operators}{3}
\DeclareMathSymbol{\Xi}{\mathalpha}{operators}{4}
\DeclareMathSymbol{\Pi}{\mathalpha}{operators}{5}
\DeclareMathSymbol{\Sigma}{\mathalpha}{operators}{6}
\DeclareMathSymbol{\Upsilon}{\mathalpha}{operators}{7}
\DeclareMathSymbol{\Phi}{\mathalpha}{operators}{8}
\DeclareMathSymbol{\Psi}{\mathalpha}{operators}{9}
\DeclareMathSymbol{\Omega}{\mathalpha}{operators}{10}
\newcommand{\logic}[1]{\literal{\logicFont{#1}}}
\newcommand{\paraLogic}[2]{\ensuremath{\logic{#1}\ifthenelse{\equal{#2}{}}{}{(#2)}}\xspace}
\newcommand{\LL}{\ensuremath{\mathcal{L}}\xspace}
\journal{arXiv.org}
\begin{document}

\begin{frontmatter}

\title{Negation and partial axiomatizations of dependence and independence logic revisited\tnoteref{tn}}

 \author{Fan Yang}

\address{Department of Values, Technology and Innovation, Delft University of Technology, Jaffalaan 5, 2628 BX Delft, The Netherlands}

\address{Department of Mathematics and Statistics, PL 68 (Pietari Kalmin katu 5), 00014 University of Helsinki, Finland}

\ead{fan.yang.c@gmail.com}

\tnotetext[tn]{This research was partially supported by grant 308712 of the Academy of Finland.}

\begin{abstract}
In this paper, we axiomatize the negatable consequences in dependence and independence logic by extending the systems of natural deduction of the logics given in \cite{Axiom_fo_d_KV} and \cite{Hannula_fo_ind_13}. We prove a characterization theorem for negatable formulas in independence logic and negatable sentences in dependence logic, and identify an interesting class of formulas that are negatable in independence logic. Dependence and independence atoms, first-order formulas belong to this class. We also demonstrate our extended system of independence logic by giving explicit derivations for Armstrong's Axioms and the Geiger-Paz-Pearl axioms of dependence and independence atoms. 
\end{abstract}

\begin{keyword}
dependence logic \sep team semantics \sep negation \sep existential second-order logic


\MSC[2010] 03B60
\end{keyword}

\end{frontmatter}



\section{Introduction}

 
 
 \emph{Dependence logic}  was introduced by V\"{a}\"{a}n\"{a}nen  \cite{Van07dl}  as a development of \emph{Henkin quantifiers}  \cite{henkin61}  and \emph{independence-friendly logic}  \cite{Hintikka98book}. Recently, Gr\"{a}del and V\"{a}\"{a}n\"{a}nen \cite{D_Ind_GV} defined a variant of dependence logic, called \emph{independence logic}. 
 The two logics add to first-order logic new types of atomic formulas $\dep(\vec{x},y)$ and $\vec{x}\perp_{\vec{z}}\vec{y}$, called \emph{dependence atom} and \emph{independence atom}, to explicitly specify the dependence and independence relations between variables. Intuitively, $\dep(\vec{x},y)$ states that ``the value of $y$ is completely determined by the values of the variables in the tuple $\vec{x}$\,'', and $\vec{x}\perp_{\vec{z}}\vec{y}$ states that ``given the values of the variables $\vec{z}$, the values of $\vec{x}$ and the values of $\vec{y}$ are completely independent of each other".  These properties cannot be meaningfully manifested in \emph{single} assignments of the variables. Therefore unlike  the usual Tarskian semantics where satisfaction relation is defined with respect to {\em single} assignments of a model, formulas of dependence and independence logic are evaluated on \emph{sets} of assignments (called \emph{teams}) instead. This semantics, introduced by Hodges \cite{Hodges1997a,Hodges1997b}, is called \emph{team semantics}.

Dependence and independence logic are known to have the same expressive power as existential second-order logic $\Sigma^1_1$ (see \cite{KontVan09} and \cite{Pietro_I/E}). 
This fact has two negative consequences: The logics are not closed under classical negation and they are not (effectively) axiomatizable. The aim of this paper is to shed some new light on these problems.

 Regarding the first problem, ``negation'', being usually a desirable connective for a logic, turns out to be a tricky connective in the context of team semantics. The negation that dependence and independence logic inherit from  first-order logic (denoted by  $\neg$) is a type of ``syntactic negation", in the sense that in order to compute the meaning of the formula $\neg\phi$, the negation $\neg$ has to be brought to the very front of the atomic formulas by applying  De Morgen's laws and the double negation law. It was proved  that this negation $\neg$ is actually not a semantic operator \cite{Negation_D_KV}, meaning  that the semantic equivalence of $\phi$ and $\psi$ does not necessarily imply that $\neg\phi$ and $\neg\psi$ are semantically equivalent. The \emph{classical (contradictory) negation} (denoted by $\sim$ in the literature), on the other hand, is  indeed a semantic operator. Whereas, neither dependence nor independence logic is closed under classical negation, as the $\Sigma^1_1$ fragment of second-order logic is not. 
 Dependence logic extended with the classical negation $\sim$ is called \emph{team logic} in the literature, and it was proved \cite{KontinenNurmi2011} to have the same expressive power as full second-order logic. 

Every formula $\phi$ of dependence and independence logic is satisfied by the empty team, which in turn cannot satisfy  the classical (contradictory) negation $\sim\phi$ of the formula. This  implies that $\sim\phi$  cannot possibly be definable in dependence or independence logic for any  formula $\phi$. This technical subtlety makes the classical contradictory negation $\sim$ less interesting in the context of dependence and independence logic. In this paper, we will, instead, consider the \emph{weak classical negation}, denoted by \cn, which behaves exactly as the classical negation  except that on the empty team $\cn\phi$ is always satisfied. We call a formula $\phi$ {\em negatable} in a logic if its weak classical negation $\cn\phi$ is definable in the logic. We will prove a characterization theorem for negatable formulas in independence logic and negatable sentences in dependence logic by generalizing an argument in \cite{Van07dl}, from which it will follow that the problem of determining whether a formula  is negatable in independence logic or in dependence logic is undecidable. Yet, we will identify an interesting class of formulas, presented as a hierarchy, that are negatable in independence logic. First-order formulas, dependence and independence atoms belong to this hierarchy.  Formulas of this class are closely related to the \emph{dependency notions} considered in \cite{Galliani2014SFOD} and the \emph{generalized dependence atoms} studied in \cite{Kuusisto2015,KontinenMullerSchnoorVollmer2014}.







As for the axiomatization problem, since $\Sigma^1_1$ is not (effectively) axiomatizable,  dependence and independence logic cannot possibly be axiomatized in full. Nevertheless, \cite{Axiom_fo_d_KV} and \cite{Hannula_fo_ind_13} defined systems of natural deduction for the logics such that the equivalence
\begin{equation}\label{cmp_eq}
\Gamma\models\phi\iff \Gamma\vdash\phi
\end{equation}
holds if $\Gamma$ is a set of sentences of dependence logic (or independence logic) and $\phi$ is a first-order sentence. It was left open whether these partial axiomatizations can be generalized 
such that the above equivalence (\ref{cmp_eq}) holds if $\Gamma$ is a set of formulas  (that possibly contain free variables) and $\phi$ is a  first-order formula (that possibly contain free variables too). Kontinen  \cite{Kontinen15foc} gave such a generalization by expanding the signature with an extra relation symbol so as to interpret the teams associated with  the free variables.
In this paper, we will generalize the partial axiomatization results in \cite{Axiom_fo_d_KV,Hannula_fo_ind_13} via a different approach, an approach that makes essential use of the weak classical negation. We extend the systems given in \cite{Axiom_fo_d_KV,Hannula_fo_ind_13} by adding an elimination rule for the weak classical negation (\cn\!\textsf{E}), and show that  the equivalence (\ref{cmp_eq}) holds for the extended systems if $\Gamma$ is a set of formulas and $\phi$ is a formula that is negatable in the logics. It should be noted that because the full class of negatable formulas is undecidable, as mentioned above, the weak classical negation elimination rule \cn\!\textsf{E} and thus the full extended systems are undecidable. Nevertheless, by restricting the rule \cn\!\textsf{E} to decidable subclasses of negatable formulas, such as the above-mentioned hierarchy of negatable formulas, our systems can already have interesting applications.  As an illustration, we will give in this paper explicit derivations of Armstrong's Axioms  \cite{Armstrong_Axioms} and the Geiger-Paz-Pearl axioms  \cite{Geiger-Paz-Pearl_1991} in database theory in our extended system of independence logic, and we will also demonstrate that  Arrow's Impossibility Theorem \cite{Arrow51} in social choice theory is derivable in  the system too. 



This paper is organized as follows. In Section 2 we recall the basics of dependence and independence logic. Section 3 proves a characterization theorem for negatable formulas or sentences in the logics. In Section 4 we extend the systems of natural deduction of dependence and independence logic in  \cite{Axiom_fo_d_KV,Hannula_fo_ind_13}  to axiomatize negatable consequences in the logics. Section 5 identifies a class of negatable formulas in independence logic, of which dependence and independence atoms are members. In Section 6 we illustrate the extended system of independence logic by deriving Armstrong's Axioms and the Geiger-Paz-Pearl axioms of dependence and independence atoms in the system. We finish by making some concluding remarks in Section 7.

%
%

\section{Preliminaries}

Let us start by recalling the syntax and semantics (i.e. team semantics) of dependence and independence logic. 

We first fix the syntax of first-order logic.
Given a signature $\mathcal{L}$. First-order 
atomic $\mathcal{L}$-formulas $\lambda$ are defined as usual. Well-formed $\mathcal{L}$-formulas of first-order logic, also called \emph{first-order formulas}, (in \emph{negation normal form}) are defined by the following grammar:
\[\phi::=\lambda\mid\neg\lambda\mid\bot\mid \phi\wedge\phi\mid \phi\cor\phi\mid \exists x\phi\mid \forall x\phi\]
where $\lambda$ is an arbitrary first-order atomic $\mathcal{L}$-formula. 

Formulas will be evaluated on  the usual first-order models over the signature $\mathcal{L}$. We will use the same notation $M$ for both a model and its domain, and assume that $M$ has at least two elements. 
We write $\mathcal{L}(R)$ for the signature expanded from $\mathcal{L}$ by adding a fresh relation symbol $R$, and $(M,R^M)$ denotes the $\mathcal{L}(R)$-expansion of $M$ in which the $k$-ary relation symbol $R$ is interpreted as $R^M\subseteq M^k$. We write $\phi(R)$ to emphasize that the relation symbol $R$ occurs in  the formula $\phi$.


Although team semantics is intended for extensions of first-order logic obtained by adding dependence or independence atoms, for the sake of comparison we will now introduce the team semantics for first-order logic too. A \emph{team} $X$ of a model $M$ over a set $V$ of first-order variables is a set of assignments of $M$ over $V$, i.e., a set of functions $s:V\to M$. The set $V$  is called the \emph{domain} of $X$, denoted by  ${\rm dom}(X)$.


There is one and only one assignment of $M$ over the empty domain, namely the empty assignment $\emptyset$. The singleton of the empty assignment $\{\emptyset\}$ is a team of $M$, and the empty set $\emptyset$ is a team of $M$ over any domain. 

Let $s$ be an assignment of $M$ over $V$ and $a\in M$. We write $s(a/x)$ for the assignment of $M$ over $V\cup\{x\}$ defined as $s(a/x)(x)=a$ and $s(a/x)(y)=s(y)$ for all $y\in V\setminus \{x\}$. For any set $N\subseteq M$ and any function $F:X\to \wp(M)\setminus\{\emptyset\}$, define 
\[X(N/x)=\{s(a/x): a\in N,~s\in X\}\] and \[X(F/x)=\{s(a/x):s\in X\text{ and }a\in F(s)\}.\]
We write  simply $X(a/x)$ for $X(\{a\}/x)$.  Denote by $\vec{x}$ or $\textsf{x}$  a sequence $x_1,\dots,x_n$ of variables and the length $n$ will always be clear from the context or does not matter; similarly for a sequence $\vec{F}$ of functions and a sequence $\vec{s}$ of assignments. A team $X(N/x_1)\dots (N/x_n)$ will sometimes be abbreviated as $X(N/\vec{x})$, and a team $X(F_1/x_1)\dots(F_n/x_n)$ as $X(F_1/x_1,\dots,F_n/x_n)$ or 
$X(\vec{F}/\vec{x})$.

We now define the team semantics for first-order formulas. Note that our version of the team semantics for  disjunction and  existential quantifier is known as the \emph{lax semantics} in the literature (see e.g., \cite{Pietro_I/E}).
\begin{defn}\label{TS_FO}
Define inductively  the notion of a first-order formula $\phi$ being \emph{satisfied} on a model $M$ and a team $X$, denoted by $M\models_X\phi$, as follows:
\begin{itemize}
\item $M\models_X\lambda$ with $\lambda$ a first-order atomic formula ~iff~
for all $s\in X$, $M\models_s\lambda$ in the usual sense.
  \item $M\models_X\neg\lambda$ with $\lambda$ a first-order atomic formula ~iff~
for all $s\in X$, $M\not\models_s\lambda$ in the usual  sense.
  \item $M\models_X\bot$ ~iff~ $X=\emptyset$.
  \item $M\models_X\phi\wedge\psi$ ~iff~ $M\models_X\phi$ and
  $M\models_X\psi$.
  \item $M\models_X\phi\sor\psi$ ~iff~ there exist $Y,Z\subseteq X$ with $X=Y\cup Z$ such that 
  $M\models_Y\phi$ and $M\models_Z\psi$.
   \item $M\models_X\exists x\phi$ ~iff~ $M\models_{X(F/x)}\phi$ for some function $F:X\to
  \wp(M)\setminus\{\emptyset\}$.
  \item $M\models_X\forall x\phi$ ~iff~ $M\models_{X(M/x)}\phi$.
\end{itemize}
\end{defn}

A routine inductive proof shows that first-order formulas have the downward closure property and the union closure property:
\begin{description}
\item[(Downward Closure Property)] $M\models_X\phi$ and $Y\subseteq X$ imply $M\models_Y\phi$.

\vspace{4pt}
\item[(Union Closure Property)] $M\models_{X_i}\phi$ for all $i\in I$ implies $M\models_{\bigcup_{i\in I}X_i}\phi$.
\end{description}
which combined are equivalent to the flatness property:
\begin{description}
\item[(Flatness Property)] \(M\models_X\phi\iff M\models_{\{s\}}\phi\text{ for all }s\in X\).
\end{description}
It is easy to see that over singleton teams the team semantics for first-order formulas $\phi$ coincides with the usual single-assignment semantics in the  sense that
\begin{equation}\label{fo_team_single_semantics}
M\models_{\{s\}}\phi\iff M\models_s\phi
\end{equation}
holds for any  model $M$ and  assignment $s$. If $\phi$ is a first-order formula, then the string $\neg\phi$, called the \emph{syntactic negation} of $\phi$, can be viewed as a first-order formula in negation normal form obtained in the usual way (i.e. by applying  De Morgan's laws, the double negation law, etc.), and we write $\phi\to\psi$ for the  formula $\neg\phi\cor\psi$. 
Since the law of excluded middle $\phi\cor\neg\phi$ holds for first-order formulas
 under the usual single-assignment semantics, by Equivalence (\ref{fo_team_single_semantics}) we know that $\mathop{M\models_{\{s\}}\phi\cor\neg\phi}$ always holds, which, together with the flatness property,  implies that $M\models_X\phi\cor\neg\phi$ holds for all teams $X$ and  models $M$. In other words, the law of excluded middle holds for first-order formulas also in the sense of team semantics.

We now turn to dependence and independence logic. 
Well-formed $\mathcal{L}$-formulas of independence logic (\Ind) are defined by the following grammar:
\[\begin{split}
\phi::=~& \lambda\mid\neg\lambda\mid \bot\mid x_1\dots x_n\perp_{z_1\dots z_k} y_1\dots y_m\mid \dep(x_1,\dots, x_n,y)\mid x_1\dots x_n\subseteq y_1\dots y_n\mid\\
&  \phi\wedge\phi\mid \phi\cor\phi\mid \exists x\phi\mid \forall x\phi
\end{split}\]
where $\lambda$ ranges over first-order atomic $\mathcal{L}$-formulas. The formulas $\dep(\vec{x},y)$, $\vec{x}\perp_{\vec{z}}\vec{y}$ and $\vec{x}\subseteq \vec{y}$ are called \emph{dependence atom}, \emph{independence atom} and \emph{inclusion atom}, respectively.  We refer to any of these atoms as \emph{atoms of dependence and independence}. 
 For the convenience of our argument in the paper, the independence logic as defined has a richer syntax than the standard one in the literature, which has the same syntax as first-order logic extended with independence atoms only. The other atoms 
are definable in the standard independence logic; for a proof, see e.g., \cite{Pietro_thesis}. \emph{Dependence logic} (\D), which is a fragment of \Ind, is defined as first-order logic extended with dependence atoms, and first-order logic extended with inclusion atoms is called \emph{inclusion logic}. In this paper we will only concentrate on dependence logic and independence logic.



The set $\mathop{\rm{Fv}}(\phi)$
 of free variables of a formula $\phi$ of \Ind is defined as usual and we also have the new cases for dependence and independence atoms:
\begin{itemize}
\item ${\rm Fv}(x_1\dots x_n\perp_{z_1\dots z_k} y_1\dots y_m)=\{x_1,\dots,x_n,y_1,\dots,y_m,z_1,\dots,z_k\}$,
\item ${\rm Fv}(\dep(x_1,\dots,x_n,y))=\{x_1,\dots,x_n,y\}$,
\item ${\rm Fv}(x_1,\dots,x_n\subseteq y_1,\dots,y_n)=\{x_1,\dots,x_n,y_1,\dots,y_n\}$.
\end{itemize}
We write $\phi(\vec{x})$ to indicate that the free variables occurring in $\phi$ are among $\vec{x}$. A formula $\phi$ is called a \emph{sentence} if it has no free variable.  

We write $\phi(t/x)$ for the formula obtained from $\phi$ by substituting uniformly every free occurrence of the variable $x$ for the term $t$. We sometimes abbreviate the substituted formula $\phi(t_1/x_1)\cdots(t_n/x_n)$ as $\phi(\vec{t}/\vec{x})$, and for a formula $\phi(\vec{x})$ and a list $\vec{w}$ of fresh variables we sometimes also write $\phi(\vec{w})$ for $\phi(\vec{w}/\vec{x})$. We write $s(\vec{x})$ for $\langle s(x_1),\dots,s(x_n)\rangle$.

\begin{defn}\label{TS_Ind}
Define inductively  the notion of a formula $\phi$ of \Ind being \emph{satisfied} on a model $M$ and a team $X$, denoted by $M\models_X\phi$. All the cases are identical to those defined in  \Cref{TS_FO} and additionally:
\begin{itemize}
\item $M\models_X\vec{x}\perp_{\vec{z}}\vec{y}$  ~iff~
for all $s,s'\in X$, $s(\vec{z})=s'(\vec{z})$ implies that there exists $ s''\in X$ such that \vspace{-0.5\baselineskip}
\[s''(\vec{z})=s(\vec{z})=s'(\vec{z}),~s''(\vec{x})=s(\vec{x})\text{ and }s''(\vec{y})=s'(\vec{y}).\]
  \item $M\models_X\dep(\vec{x},y)$ ~iff~ for all $s,s'\in X$, 
$s(\vec{x})=s'(\vec{x})$ implies $s(y)=s'(y)$.
    \item $M\models_X\vec{x}\subseteq\vec{y}$  ~iff~ for all $s\in X$, there exists $s'\in X$ such that $s'(\vec{y})=s(\vec{x})$.
 \end{itemize}
\end{defn}
We write $\vec{x}\perp\vec{y}$ for $\vec{x}\perp_{\langle\rangle}\vec{y}$, and $\dep(x)$ for $\dep(\langle\rangle,x)$, where $\langle\rangle$ is the empty sequence. Note that the semantic clauses for $\vec{x}\perp\vec{y}$ and $\dep(x)$ reduce to
\begin{itemize}
\item {\em $M\models_X\vec{x}\perp\vec{y}$  ~iff~ for all $s,s'\in X$, there exist $s''\in X$ such that 
\[s''(\vec{x})=s(\vec{x})\text{ and }s''(\vec{y})=s'(\vec{y}).\]}
\item {\em $M\models_X\dep(x)$  ~iff~ for all $s,s'\in X$, $s(x)=s'(x)$.}
\end{itemize}


A sentence $\phi$ is said to be \emph{true} in $M$, written $M\models\phi$, if 
$M\models_{\{\emptyset\}}\phi$. We write  $\Gamma\models\psi$
if for any model $M$ and  team $X$,
$M\models_X \phi\text{ for all }\phi\in\Gamma$ implies $M\models_X \psi$.
We also write $\phi\models\psi$ for $\{\phi\}\models\psi$. If $\phi\models\psi$ and $\psi\models\phi$, then we write $\phi\equiv\psi$.


We leave it for the reader to verify that formulas of dependence logic have the downward closure property, 
and formulas of independence logic have the empty team property and the locality property defined below:
\begin{description}
\item[(Empty Team Property)] $M\models_\emptyset\phi$ 

\vspace{4pt}

\item[(Locality Property)] If $\{s\upharpoonright {\rm Fv}(\phi)\mid s\in X\}=\{s\upharpoonright {\rm Fv}(\phi)\mid s\in Y\}$\footnote{For an assignment $s:V\to M$ and a set $V'\subseteq V$ of variables, we write $s\upharpoonright V'$ for the restriction of $s$ to the domain $V'$.}, then 
\[M\models_X\phi\iff M\models_Y\phi.\]
\end{description}

Recall that the existential second-order logic ($\Sigma^1_1$) consists of those formulas that are equivalent to some formulas of the form
\(\exists R_1\dots\exists R_k\phi,\) 
where $\phi$ is a first-order formula. An $\LL(R)$-sentence $\phi(R)$ of $\Sigma^1_1$ is said to be \emph{downward monotone} with respect to $R$ if 
$(M,Q)\models\phi(R)\text{ and }Q'\subseteq Q$ imply $(M,Q')\models\phi(R)$.
It is known that $\phi(R)$ is downward monotone with respect to $R$ if and only if $\phi(R)$ is equivalent to a sentence where $R$ occurs  only negatively (see e.g., \cite{KontVan09}).
A team $X$ of $M$ over $\{x_1,\dots,x_n\}$ induces an $n$-ary relation
\[rel(X):=\{(s(x_1),\dots,s(x_n))\mid s\in X\}\]
 on $M$; conversely, an $n$-ary relation $R^M$ on $M$ induces a team
\[X_R:=\{\{(x_1,a_1),\dots,(x_n,a_n) \}\mid (a_1,\dots,a_n)\in R^M\}.\]

\begin{theorem}[see \cite{Van07dl,KontVan09,Pietro_I/E}]\label{ind2sigma11}
\begin{description}
\item[(i)] Every $\LL$-sentence $\phi$  of \Ind (or \D) is equivalent to an $\LL$-sentence $\tau_\phi$ of $\Sigma^1_1$, i.e., 
\[M\models\phi\iff M\models\tau_\phi\]
holds for any $\LL$-model $M$; and conversely, every $\LL$-sentence $\psi$ of $\Sigma^1_1$ is equivalent to an $\LL$-sentence $\rho(\psi)$  of \Ind (or \D).
\item[(ii)] For every \LL-formula $\phi$  of   \Ind, there is an $\LL(R)$-sentence $\tau_\phi(R)$ of $\Sigma^1_1$ such that for all $\LL$-model $M$ and teams $X$,
\[M\models_X\phi\iff (M,rel(X))\models\tau_\phi(R).\]
If, in particular, $\phi$ is a formula   of \D, then the relation symbol $R$ occurs in the sentence $\tau_\phi(R)$ only negatively. 
\item[(iii)] For every $\LL(R)$-sentence $\psi(R)$ of $\Sigma^1_1$  that is downward monotone with respect to $R$, there is an \LL-formula $\rho(\psi)$  of \D  such that for all $\LL$-models $M$ and teams $X$,
\begin{equation}\label{sig11_ind_d}
M\models_X\rho(\psi)\iff (M,rel(X))\models\psi(R)\vee\forall \vec{x}\neg R\vec{x}.
\end{equation}
\item[(iv)] For every $\LL(R)$-sentence $\psi(R)$ of $\Sigma^1_1$, there is an \LL-formula $\rho(\psi)$  of \Ind  such that (\ref{sig11_ind_d}) holds for all $\LL$-models $M$ and teams $X$.
\end{description}
\end{theorem}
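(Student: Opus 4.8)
The plan is to prove part (iv) by adapting the standard translations of existential second-order logic into team logics (as in \cite{Van07dl}, \cite{KontVan09} and \cite{Pietro_I/E}), with two new ingredients: the relation $R$ is not quantified away but is \emph{encoded by the team itself} through the free variables $\vec x$, and the empty-team behaviour of \Ind automatically realizes the extra disjunct $\forall\vec x\neg R\vec x$. Indeed, since every formula of \Ind has the empty team property, for $X=\emptyset$ we get $M\models_X\rho(\psi)$ while $rel(X)=\emptyset$, so the right-hand side of (\ref{sig11_ind_d}) holds via its second disjunct. Hence it suffices to construct $\rho(\psi)$ so that $M\models_X\rho(\psi)\iff(M,rel(X))\models\psi(R)$ for every \emph{nonempty} team $X$ over $\vec x$, for which $R:=rel(X)\neq\emptyset$. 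Moreover, arranging $\mathrm{Fv}(\rho(\psi))=\{\vec x\}$ makes the construction consistent with the locality property, since then $M\models_X\rho(\psi)$ depends on $X$ only through $R$.

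First I would put $\psi(R)$ into a convenient normal form. Writing $\psi(R)\equiv\exists R_1\cdots\exists R_k\,\phi$ with $\phi$ first-order, Skolemizing the first-order quantifiers, and encoding each guessed relation $R_i$ by its characteristic function (here the standing assumption that $M$ has at least two elements is used), I obtain $\psi(R)\equiv\exists g_1\cdots\exists g_m\,\forall\vec y\,\theta_0$, where $\theta_0$ is quantifier-free over the signature $\LL(R,g_1,\dots,g_m)$. The quantifier prefix $\exists\vec g\,\forall\vec y$ is then simulated in team semantics in the usual way: $\forall\vec y$ becomes the operation $X\mapsto X(M/\vec y)$, and each existential function quantifier $\exists g_i$ is simulated by a first-order $\exists$ guarded by a dependence atom $\dep(\vec u_i,v_i)$ forcing $v_i$ to be a genuine function of its arguments (flattening compound terms that occur as arguments of the $g_i$ in the standard manner). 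A point that is easily checked, and important, is that none of these operations touches the variables $\vec x$, so the $\vec x$-projection of the resulting team remains exactly $R$; thus $R$ stays ``visible'' throughout.

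Second, and this is the substantive step, I would translate the quantifier-free matrix $\theta_0$ into an \Ind-formula $\theta_0^\ast$ evaluated \emph{flatly}, i.e.\ so that $M\models_Y\theta_0^\ast$ holds iff $(M,R,\vec g)\models_s\theta_0$ for every $s$ in the extended team $Y$. Ordinary $\LL(\vec g)$-literals, which do not mention $R$, are flat and translate verbatim. A \emph{positive} occurrence $R\vec t$ is captured by the inclusion atom $\vec t\subseteq\vec x$, which on a team whose $\vec x$-projection is $R$ asserts $s(\vec t)\in R$; a \emph{negative} literal $\neg R\vec t$ is captured by the exclusion atom $\vec t\mathbin{|}\vec x$, which is definable in \Ind. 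This is precisely where genuine independence logic is needed rather than dependence logic: the inclusion atom is not downward closed, so positive occurrences of $R$ cannot be handled inside \D. This is exactly why part (iii) is restricted to downward-monotone $\psi$ (where $R$ occurs only negatively, and exclusion atoms suffice) whereas part (iv) is not. To recover the correct per-assignment reading of $\theta_0$, I would put $\theta_0$ in disjunctive normal form and split the team with $\cor$ according to which disjunct each assignment satisfies, translating each conjunction of literals as above.

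The main obstacle I anticipate is exactly this splitting. The inclusion and exclusion atoms refer to the $\vec x$-projection of the \emph{current} part of the team, yet splitting $Y$ with $\cor$ can shrink that projection below $R$ and thereby corrupt the intended meaning of $\vec t\subseteq\vec x$ and $\vec t\mathbin{|}\vec x$. Overcoming this requires exploiting the laxness of the disjunction, which permits the parts of a split to overlap, so as to retain in every part a full copy of the assignments witnessing $R=rel(X)$; then the inclusion and exclusion atoms always see the entire relation $R$. Verifying that this bookkeeping can be carried out uniformly, and that the resulting $\theta_0^\ast$ really does evaluate flatly to the first-order truth of $\theta_0$ in $(M,R,\vec g)$, is the technical heart of the argument. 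Once it is in place, combining it with the quantifier-prefix simulation yields a formula $\rho(\psi)$ of \Ind with $\mathrm{Fv}(\rho(\psi))=\{\vec x\}$ satisfying the required equivalence on nonempty teams; together with the empty-team case already settled above, this establishes (\ref{sig11_ind_d}) for all models $M$ and all teams $X$.
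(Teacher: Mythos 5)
You are reviewing a statement for which the paper itself supplies no proof---it is quoted from \cite{Van07dl}, \cite{KontVan09} and \cite{Pietro_I/E}---so the comparison must be with the proofs in those sources. Your skeleton for part (iv) is indeed their skeleton: Skolemize $\psi(R)$ to $\exists\vec g\,\forall\vec y\,\theta_0$, simulate $\exists\vec g$ by first-order existential quantifiers guarded by dependence atoms and $\forall\vec y$ by $\forall$, translate $R\vec t$ as $\vec t\subseteq\vec x$ and $\neg R\vec t$ as an exclusion atom, and let the empty team property absorb the disjunct $\forall\vec x\,\neg R\vec x$; the reduction to nonempty teams and your explanation of why (iii) is restricted to downward monotone $\psi$ are also correct. (Parts (i) and (ii), which you leave untouched, are the routine compositional directions.)

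The gap is at the step you yourself call the technical heart, and the fix you propose cannot work because it addresses the wrong direction of the equivalence. The danger in splitting is a failure of \emph{soundness}: the semantics of $\cor$ asks only that \emph{some} cover $Y=Y_1\cup Y_2$ exist, so a bad cover can verify the translated disjuncts even when $\psi$ fails. Concretely, for $\psi(R)=\forall y\,\bigl(\neg R(a,y)\vee\neg R(b,y)\bigr)$ with constants $a^M\neq b^M$, take $X$ with $rel(X)=\{(a^M,d),(b^M,d)\}$; covering $X(M/y)$ according to the value of $x_1$ (assignments with $x_1=b^M$ into the part for the first exclusion atom, those with $x_1=a^M$ into the part for the second) satisfies $\forall y\,\bigl((ay\mathbin{|}x_1x_2)\cor(by\mathbin{|}x_1x_2)\bigr)$, although $(M,rel(X))\not\models\psi(R)$. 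Permitting the parts to \emph{overlap} cannot repair this: laxness only enlarges the set of admissible covers, which makes soundness harder, not easier, and nothing in the semantics of $\cor$ obliges a cover to ``retain a full copy of $R$ in each part''; conversely, forcibly adding $R$-witnessing assignments to a part would in general violate that part's own literals (exclusion atoms and flat disequalities are universal conditions over all assignments of the part), which would break completeness instead. What the cited proofs actually do is \emph{syntactic}: they make every case distinction constant on each $\vec y$-fiber by introducing further existentially quantified selector variables $u$ guarded by $\dep(\vec y,u)$ (using $|M|\geq 2$ a second time), rewriting the matrix as a conjunction of implications whose antecedents mention only $\vec y$, $\vec w$ and $u$. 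Since these antecedents are flat and fiber-constant, every admissible cover is forced to place the \emph{entire} fiber of each relevant assignment into the part that must verify the corresponding literal, and each fiber of $X(M/\vec y)[\dots]$ has $\vec x$-projection exactly $rel(X)$; only then do the inclusion and exclusion atoms carry their intended meaning in both directions. Without this device (or an equivalent one) your construction does not establish the soundness direction of (\ref{sig11_ind_d}), so the proposal as written has a genuine gap.
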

In the sequel, we will use the notations $\tau_\phi$ and $\tau_\phi(R)$ to denote the (up to semantic equivalence) unique formulas obtained in the above theorem and refer to them as the \emph{$\Sigma^1_1$-translations} of the formulas $\phi$ of \D or \Ind.


\section{First-order formulas and negatable formulas}\label{sec:negatable_frm}



Formulas of dependence and independence logic can be translated into $\Sigma^1_1$ (\Cref{ind2sigma11}). We thus use the term ``first-order formula'' in the team semantics setting in two senses:
A first-order formula $\phi$ can be viewed either  as a formula of \D or \Ind that is to be evaluated on teams, or as a usual  formula of first-order logic that is to be evaluated on single assignments and is possibly (equivalent to) the $\Sigma^1_1$-translation $\tau_\psi$ of some formula $\psi$  of \D or \Ind. With the latter reading, $M\models_s\neg\phi$ iff $M\not\models_s\phi$ holds for all models $M$ and assignments $s$, and the formula $\neg\phi$ is thus understood as the classical (contradictory) negation of $\phi$. However, on the team semantics side, unless the team $X$ is a singleton, $M\not\models_X\phi$ is in general not equivalent to $M\models_X\neg\phi$. To express the contradictory negation in the team semantics setting, let us define the \emph{classical negation} $\sim$ and the \emph{weak classical negation} $\cn$  as follows:
\begin{itemize}
\item {\em $M\models_X\sim\phi$ ~iff~ $M\not\models_X\phi$,}
\item {\em $M\models_X\cn\phi$ ~iff~ either $M\not\models_X\phi$ or $X=\emptyset$.}
\end{itemize}
Since formulas of dependence and independence logic have the empty team property, the classical negation $\sim\phi$ of any formula $\phi$ is not definable in the logics and we are therefore not interested in the classical negation $\sim$ in this paper. On the other hand, the weak classical negation $\cn\phi$  can be definable in the logics for some formulas $\phi$. We say that a formula $\phi$ of \Ind (or \D) is \emph{negatable}  if there is a formula $\psi$ of \Ind (or \D) such that $\cn\phi\equiv\psi$.  In this case, we also say that $\phi$ is negatable in \Ind (or \D).

For any first-order sentence $\phi$,  we have $M\not\models_{\{\emptyset\}}\phi$ iff $M\models_{\{\emptyset\}}\neg\phi$ by the law of excluded middle. Thus $\cn\phi\equiv\neg\phi$, meaning that first-order sentences  are negatable both in \D and in \Ind. Recall that first-order formulas are flat, and we now prove that negatable formulas of \D are, actually, all flat. 

\begin{fact}
If  a formula $\phi$ of \D is negatable, then it is  upward closed (i.e. $M\models_X\phi$ and $\emptyset\neq X\subseteq Y$ imply $M\models_Y\phi$), and thus flat.
\end{fact}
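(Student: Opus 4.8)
The plan is to argue by contradiction, exploiting the fact that any witness $\psi$ for $\cn\phi$ is itself a formula of \D and hence downward closed. Suppose $\phi$ is negatable in \D, say $\cn\phi\equiv\psi$ with $\psi$ a formula of \D. To establish upward closure, I would fix $M$, $X$, $Y$ with $M\models_X\phi$ and $\emptyset\neq X\subseteq Y$, and assume for contradiction that $M\not\models_Y\phi$.

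From $M\not\models_Y\phi$ the semantic clause for $\cn$ immediately gives $M\models_Y\cn\phi$, and therefore $M\models_Y\psi$ by the assumed equivalence. Now I would invoke the downward closure property of \D-formulas applied to $\psi$: since $X\subseteq Y$, we get $M\models_X\psi$, hence $M\models_X\cn\phi$. Unwinding the definition of $\cn$ once more, this means either $M\not\models_X\phi$ or $X=\emptyset$, both of which contradict the standing hypotheses $M\models_X\phi$ and $X\neq\emptyset$. This contradiction yields $M\models_Y\phi$, which is exactly upward closure.

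For the ``and thus flat'' clause, I would combine upward closure with the two properties that every \D-formula already enjoys, namely downward closure and the empty team property (recalled in the introduction). The forward direction of flatness, that $M\models_X\phi$ implies $M\models_{\{s\}}\phi$ for each $s\in X$, is just downward closure applied to the singleton $\{s\}\subseteq X$. For the converse, suppose $M\models_{\{s\}}\phi$ for all $s\in X$: if $X=\emptyset$ then $M\models_\emptyset\phi$ by the empty team property, and if $X\neq\emptyset$ then picking any $s_0\in X$ gives $\emptyset\neq\{s_0\}\subseteq X$ with $M\models_{\{s_0\}}\phi$, so upward closure delivers $M\models_X\phi$.

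I do not expect a serious obstacle here; the proof is a short contradiction argument followed by a routine assembly of closure properties. The one point worth emphasizing is the choice of which feature of the witness $\psi$ to use: the entire argument hinges on $\psi$ being downward closed, since that is precisely what lets one transport the failure of $\phi$ from the larger team $Y$ down to the smaller team $X$, where it collides with the hypothesis $M\models_X\phi$.
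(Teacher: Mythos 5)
Your proof is correct and follows essentially the same route as the paper: the paper argues contrapositively that a failure of upward closure ($M\models_X\phi$, $M\not\models_Y\phi$, $\emptyset\neq X\subseteq Y$) would make $\cn\phi$ fail downward closure and hence be undefinable in \D, which is exactly your contradiction argument read in reverse, hinging on the same key point that the witness for $\cn\phi$ must be downward closed. Your explicit assembly of the ``thus flat'' clause from upward closure, downward closure, and the empty team property is a routine detail the paper leaves implicit, and it is carried out correctly.
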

\begin{proof}
Suppose $\phi$ is a formula of \D that is not upward closed. Then, there exist a model $M$ and two teams $X\neq\emptyset$ and $Y\supseteq X$ such that $M\models_X\phi$ and $M\not\models_Y\phi$. 
But this means that $\cn\phi$ is not downward closed and thus not definable in \D.
\end{proof}




We will see next that the above fact does not apply to independence logic. Also note that sentences are always upward closed (since a sentence is true either on all teams or on the empty team  only). Thus, the converse direction of the above fact, if true, would imply that all sentences  of \D are negatable. But this is not the case, as we will see in the following characterization theorem for negatable sentences in \D and negatable formulas in \Ind.  



\begin{theorem}\label{neg_charac_thm_ind}
\begin{description}
\item[(i)] An $\LL$-formula $\phi$  of  \Ind is negatable if and only if its $\Sigma^1_1$-translation $\tau_\phi(R)$  is equivalent to a first-order sentence.
\item[(ii)] An \LL-sentence $\phi$  of \D is negatable if and only if its $\Sigma^1_1$-translation $\tau_\phi$ is equivalent to a first-order sentence.
\end{description}
\end{theorem}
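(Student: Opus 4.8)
The plan is to reduce both statements to a single model-theoretic lemma: \emph{a $\Sigma^1_1$-sentence $\theta$ over any signature is equivalent to a first-order sentence as soon as its classical negation $\neg\theta$ is again equivalent to a $\Sigma^1_1$-sentence}. I would prove this by Craig interpolation. Writing $\theta\equiv\exists\vec S\,\alpha$ and $\neg\theta\equiv\exists\vec T\,\beta$ with $\alpha,\beta$ first-order and $\vec S,\vec T$ disjoint tuples of fresh relation symbols, the fact that $\theta$ and $\neg\theta$ partition all structures makes $\exists\vec S\alpha\wedge\exists\vec T\beta$ unsatisfiable, hence $\alpha\wedge\beta$ unsatisfiable, i.e.\ $\alpha\models\neg\beta$. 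An interpolant $\gamma$ then lives in the common signature, which no longer mentions $\vec S$ or $\vec T$; since $\gamma$ has no $\vec S$ we get $\theta\equiv\exists\vec S\alpha\models\gamma$, and from $\gamma\models\neg\beta$ we get $\gamma\models\neg\exists\vec T\beta\equiv\theta$, so $\theta\equiv\gamma$ is first-order. This lemma is the real content, and I expect its interpolation bookkeeping (together with the empty-team guard below) to be the main, though fairly standard, obstacle.

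The bridge to negatability is the translation of the weak classical negation. Using \Cref{ind2sigma11}(ii) and the semantic clause for $\cn$, for a formula $\phi$ of \Ind I would first record, writing $E$ for the sentence $\forall\vec x\,\neg R\vec x$ expressing "$rel(X)=\emptyset$",
\[ M\models_X\cn\phi \iff (M,rel(X))\models\neg\tau_\phi(R)\vee E, \]
since $M\not\models_X\phi$ corresponds to $\neg\tau_\phi(R)$ and $X=\emptyset$ to $E$. For the direction ``first-order $\Rightarrow$ negatable'' of (i), I would assume $\tau_\phi(R)\equiv\theta(R)$ with $\theta$ first-order; then $\neg\theta(R)\vee E$ is first-order, hence $\Sigma^1_1$, and feeding it into \Cref{ind2sigma11}(iv) (noting $(\neg\theta\vee E)\vee E\equiv\neg\theta\vee E$) yields a formula $\rho$ of \Ind with $\cn\phi\equiv\rho$. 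For the converse I would assume $\cn\phi\equiv\psi$ with $\psi$ of \Ind, so that the displayed equivalence together with $M\models_X\psi\iff(M,rel(X))\models\tau_\psi(R)$ gives $\tau_\psi(R)\equiv\neg\tau_\phi(R)\vee E$. Now the empty team property of \Ind yields $E\models\tau_\phi(R)$, i.e.\ $\neg\tau_\phi(R)\models\neg E$, which lets me rearrange this into $\neg\tau_\phi(R)\equiv\tau_\psi(R)\wedge\neg E$, a $\Sigma^1_1$-sentence. Thus $\tau_\phi(R)$ and its negation are both $\Sigma^1_1$, and the lemma forces $\tau_\phi(R)$ to be first-order.

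For (ii) the same scheme applies with two adjustments forced by working with \D-sentences. Since $\phi$ is a sentence, $\tau_\phi$ carries no $R$, and for a sentence $M\models_X\phi\iff(X=\emptyset\text{ or }M\models\phi)$; consequently $M\models_X\cn\phi\iff(X=\emptyset\text{ or }M\not\models\tau_\phi)$. The direction ``first-order $\Rightarrow$ negatable'' is then immediate: if $\tau_\phi\equiv\theta$ with $\theta$ first-order, the first-order sentence $\neg\theta$, read as a \D-formula, already has exactly this team-semantic meaning, so $\cn\phi\equiv\neg\theta$. For the converse, $\cn\phi\equiv\psi$ with $\psi$ of \D gives (via the translation of $\psi$ from \Cref{ind2sigma11}(ii)) $\tau_\psi(R)\equiv\neg\tau_\phi\vee E$; here the second adjustment is that I must eliminate the auxiliary symbol $R$, which occurs only in $\tau_\psi$. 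Since models are nonempty, $M^k$ is a nonempty witness for $R$, and on nonempty relations the right-hand side collapses to $\neg\tau_\phi$, which does not depend on $R$; hence $\neg\tau_\phi\equiv\exists R\,(\tau_\psi(R)\wedge\neg E)$, again a $\Sigma^1_1$-sentence. With $\tau_\phi$ and $\neg\tau_\phi$ both $\Sigma^1_1$, the lemma delivers that $\tau_\phi$ is first-order.
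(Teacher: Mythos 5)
Your proof is correct, and it rests on the same two pillars as the paper's own proof --- the translation identity $M\models_X\cn\phi\iff(M,rel(X))\models\neg\tau_\phi(R)\vee\forall\vec{x}\neg R\vec{x}$ obtained from \Cref{ind2sigma11}, and a Craig-interpolation argument for $\Sigma^1_1$ --- but your decomposition is genuinely different and leaner. The paper proves two interpolation statements (\Cref{neg_char_thm_so}): the plain one, (ii), saying that a $\Sigma^1_1$-sentence whose classical negation is also $\Sigma^1_1$ must be first-order, and a guarded one, (i), treating $\neg\phi\vee\forall\vec{x}\neg R\vec{x}$ under the side hypothesis $(M,\emptyset)\models\phi(R)$; the guarded one, whose Claim 1 needs the case split on whether $R^M=\emptyset$, is what the paper invokes for \Ind. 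You use only the plain lemma: from the empty team property you get $\forall\vec{x}\neg R\vec{x}\models\tau_\phi(R)$, and you use this to absorb the guard by Boolean algebra, rewriting $\neg\tau_\phi(R)\equiv\tau_\psi(R)\wedge\exists\vec{x}R\vec{x}$, which is manifestly $\Sigma^1_1$, so the unguarded lemma applies directly; the paper instead feeds the very same empty-team fact into the guarded lemma as its hypothesis. Likewise for item (ii): where the paper routes the sentence case through \Cref{ind2sigma11}(i) and \Cref{neg_char_thm_so}(ii), you keep the $R$-translation of the witness $\psi$ and then existentially quantify $R$ away using the nonempty interpretation $M^k$, and you get the easy direction for free from $\cn\phi\equiv\neg\theta$ without any translation machinery. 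What your route buys is economy: a single interpolation lemma with no side-hypothesis bookkeeping serves both items. What the paper's route buys is the free-standing statement \Cref{neg_char_thm_so}(i), a more general fact about $\Sigma^1_1$-formulas satisfied by the empty relation, recorded as a result of independent interest.
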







Before we give the proof the  above theorem, let us make two remarks. First, the theorem states that negatable formulas in \Ind are exactly those formulas that have (essentially) first-order translations, and negatable sentences in \D are exactly those sentences that have (essentially) first-order translations. Therefore the problem of determining whether a formula of  \Ind or a sentence of \D is negatable reduces to the problem of determining whether a $\Sigma^1_1$-sentence ($\tau_\phi$) is equivalent to a first-order formula, or whether the second-order quantifiers in a $\Sigma^1_1$-sentence can be eliminated. This problem 
is known to be undecidable (this follows from e.g., \cite{Chagrova91}).



Next, we make some comments on yet another type of negation, the \emph{intuitionistic negation}.  Abramsky and V\"{a}\"{a}n\"{a}nen \cite{AbVan09} introduced the \emph{intuitionistic implication} (denoted by $\to$ too) that has the semantics clause:
\begin{itemize}
\item {\em $M\models_X\phi\to\psi$ ~iff~ for all $Y\subseteq X$,  $M\models_Y\phi$ implies $M\models_Y\psi$.}
\end{itemize}
It is easy to check that if $\phi$ and $\psi$ are first-order formulas, then $\phi\to\psi\equiv\neg\phi\vee\psi$ (and thus our slight abuse of the notation $\to$ does not cause any essential problem).
The \emph{intuitionistic negation} of a formula $\phi$  is defined (according to the usual convention) as $\phi\to\bot$ and its semantics clause reduces to:
\begin{itemize}
\item {\em $M\models_X\phi\to\bot$ ~iff~ for all nonempty $Y\subseteq X$,  $M\not\models_Y\phi$.}
\end{itemize}
If a formula $\phi$ of $\D$ is negatable, then both $\phi$ and $\cn\phi$ are downward closed, and thus $\cn\phi\equiv\phi\to\bot$. Also, for sentences the weak classical negation and the intuitionsitic negation coincide, i.e., $\cn\phi\equiv\phi\to\bot$ holds for $\phi$ being a sentence as well. However, the intuitionistic negation $\phi\to\bot$ is not in the language of \D or \Ind, and in fact, its second-order translation is in general a $\Pi_1^1$-sentence, which is in general not expressible in \D or \Ind (see \cite{AbVan09,Yang2011} for details). For an arbitrary formula $\phi$, the intuitionistic negation $\phi\to\bot$ is not necessarily equivalent to the weak classical negation $\cn\phi$; for instance, $\dep(x)\to\bot\equiv\bot\not\equiv\cn\dep(x)$. 

Let us now turn to the proof of \Cref{neg_charac_thm_ind}.
Item (ii) actually follows implicitly from the results in \cite{Van07dl}, and item (i) can be proved by essentially the argument of Theorem 6.7  in  \cite{Van07dl}. 
For the convenience of the arugment, let us first direct our attention to the $\Sigma^1_1$ counterpart of dependence and independence logic and prove a general theorem for $\Sigma^1_1$. 


\begin{theorem}\label{neg_char_thm_so}
\begin{description}
\item[(i)] Let $\phi(R)$ be an $\LL(R)$-formula of $\Sigma^1_1$ such that $(M,\emptyset)\models\phi(R)$ for any $\LL$-model $M$. The formula $\neg\phi\vee\forall \vec{x}\neg R\vec{x}$ belongs to $\Sigma^1_1$  if and only if $\phi$ is equivalent to a first-order formula. 
\item[(ii)] Let $\phi$ be an \LL-formula of $\Sigma^1_1$. The \LL-formula $\neg\phi$ belongs to  $\Sigma^1_1$ if and only if $\phi$ is equivalent to a first-order formula. 
\end{description}
\end{theorem}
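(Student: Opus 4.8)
The plan is to reduce both items to the Craig Interpolation Theorem for first-order logic, exploiting the classical fact that a class of structures which is both $\Sigma^1_1$ and co-$\Sigma^1_1$ (equivalently $\Pi^1_1$) is already first-order definable. The two ``if'' directions I would dispatch immediately: if $\phi$ is equivalent to a first-order formula $\theta$, then in (ii) $\neg\phi\equiv\neg\theta$ is first-order, hence in $\Sigma^1_1$, and in (i) $\neg\phi\vee\forall\vec{x}\neg R\vec{x}\equiv\neg\theta\vee\forall\vec{x}\neg R\vec{x}$ is first-order, hence in $\Sigma^1_1$. So all the content sits in the two ``only if'' directions, where the hypothesis hands us a $\Sigma^1_1$-formula together with (an analogue of) its negation in $\Sigma^1_1$.

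For (ii) I would write $\phi\equiv\exists\vec{S}\,\phi_0$ and $\neg\phi\equiv\exists\vec{T}\,\psi_0$ with $\phi_0,\psi_0$ first-order, after renaming so that the tuples $\vec{S},\vec{T}$ of relation symbols are disjoint from one another and from $\LL$. Since $\phi$ and $\neg\phi$ are never simultaneously satisfied, over the combined signature one gets the first-order entailment $\phi_0\models\neg\psi_0$. As $\phi_0$ lives in $\LL\cup\vec{S}$ and $\neg\psi_0$ in $\LL\cup\vec{T}$, Craig interpolation yields an $\LL$-formula $\theta$ with $\phi_0\models\theta$ and $\theta\models\neg\psi_0$. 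I would then check $\phi\equiv\theta$: if $M\models\phi$, a witnessing interpretation of $\vec{S}$ satisfies $\phi_0$, hence $\theta$, which, being in $\LL$, holds in $M$; conversely, if $M\models\theta$ but $M\not\models\phi$, then $M\models\exists\vec{T}\,\psi_0$, and a witnessing $\vec{T}$ would satisfy both $\theta$ and $\psi_0$, contradicting $\theta\models\neg\psi_0$.

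For (i) the same idea must be relativized to the part of the models where $R$ is nonempty, with the empty case absorbed by the hypothesis $(M,\emptyset)\models\phi(R)$. I would write $\phi\equiv\exists\vec{S}\,\phi_0$ and $\neg\phi\vee\forall\vec{x}\neg R\vec{x}\equiv\exists\vec{T}\,\chi_0$ over disjoint auxiliary signatures, observing that on models where $R$ is nonempty the second formula is equivalent to $\neg\phi$. Hence on such models $\phi$ and its negation are again both $\Sigma^1_1$, giving, over the combined signature, the entailment $\phi_0\wedge\exists\vec{x}R\vec{x}\models\neg\chi_0$. Craig interpolation now produces a first-order $\theta$ in the common signature $\LL(R)$ (which this time includes $R$) with $\phi_0\wedge\exists\vec{x}R\vec{x}\models\theta$ and $\theta\models\neg\chi_0$. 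Setting $\psi:=(\forall\vec{x}\neg R\vec{x})\vee\theta$, I would verify $\phi\equiv\psi$: the implication $\phi\models\psi$ splits into the empty-$R$ case, where $\forall\vec{x}\neg R\vec{x}$ holds, and the nonempty case, where a witness for $\vec{S}$ gives $\theta$ as in (ii); and $\psi\models\phi$ uses $(M,\emptyset)\models\phi$ for the empty-$R$ case and, for the nonempty case, the same contradiction against a witness for $\vec{T}$.

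The re-assembly equivalences just sketched are routine; the one genuinely load-bearing ingredient is Craig interpolation. I expect the only real subtlety to be the relativization in (i): one must interpolate not over $\LL$ but over $\LL(R)$, guard the entailment with $\exists\vec{x}R\vec{x}$, and then glue the resulting $\LL(R)$-interpolant to the disjunct $\forall\vec{x}\neg R\vec{x}$ using the standing hypothesis that $\phi$ holds when $R$ is empty. A secondary point worth stating explicitly is that free first-order variables, if present, are treated as fresh constants, so that interpolation applied to the first-order matrices $\phi_0,\psi_0,\chi_0$ goes through verbatim.
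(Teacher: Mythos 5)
Your proposal is correct and follows essentially the same route as the paper: both reduce the nontrivial ``only if'' directions to Craig interpolation applied to the first-order matrices of the two disjoint $\Sigma^1_1$ representations, with the hypothesis $(M,\emptyset)\models\phi(R)$ absorbing the empty-$R$ case. The only cosmetic difference is in (i), where the paper keeps $\forall\vec{x}\neg R\vec{x}$ as a disjunct on the right-hand side of the interpolation entailment (so the interpolant itself turns out equivalent to $\phi$), whereas you guard the left-hand side with $\exists\vec{x}R\vec{x}$ and glue the disjunct $\forall\vec{x}\neg R\vec{x}$ onto the interpolant afterwards --- a logically equivalent maneuver.
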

\begin{proof}
(i) It suffices to prove the direction ``$\Longrightarrow$''. Suppose  both $\phi$ and $\neg\phi\vee\forall \vec{x}\neg R\vec{x}$ belong to $\Sigma^1_1$. We may assume without loss of generality that $\phi\equiv \exists S_1\dots\exists S_k\psi$ and $(\neg\phi\vee\forall \vec{x}\neg R\vec{x})\equiv\exists T_1\dots\exists T_m\chi$ for some first-order formulas $\psi$ and $\chi$, and the relation variables $S_1,\dots,S_k,T_1,\dots,T_m$ are fresh and pairwise distinct. Assume also that $\phi(R)$ and $\mathop{\exists S_1\dots\exists S_k\psi}$ are $\LL_1(R)$-formulas, and $\neg\phi(R)\vee\forall \vec{x}\neg R\vec{x}$ and $\exists T_1\dots\exists T_m\chi$ are $\LL_2(R)$-formulas.

\begin{clm}[1] 
$\psi\models\neg\chi\vee\forall \vec{x}\neg R\vec{x}$.
\end{clm}
\begin{proofclaim}[1]
Put $\mathcal{L}=\mathcal{L}_1\cup\mathcal{L}_2\cup\{R,S_1,\dots,S_k,T_1,\dots,T_m\}$. For any $\mathcal{L}$-model $M$ such that $M\models \psi$, we have $M\models\exists S_1\dots\exists S_k\psi$, thereby $M\models\phi$. If $R^M=\emptyset$, then $M\models\forall \vec{x}\neg R\vec{x}$, thereby $M\models\neg\chi\vee\forall \vec{x}\neg R\vec{x}$. If $R^M\neq\emptyset$, then we have $M\models\neg\forall \vec{x}\neg R\vec{x}$. Next, it follows that
\begin{align*}
M\models\neg\neg\phi\wedge \neg\forall \vec{x}\neg R\vec{x}&\Longrightarrow M\models\neg(\neg\phi\vee \forall \vec{x}\neg R\vec{x})
\Longrightarrow M\models\neg \exists T_1\dots\exists T_m\chi\\
&\Longrightarrow M\models\forall T_1\dots\forall T_m\neg\chi
\Longrightarrow M\models\neg\chi\\
&\Longrightarrow M\models\neg\chi\vee\forall \vec{x}\neg R\vec{x}
\end{align*}
as required.
\end{proofclaim}

Now, by Craig's Interpolation Theorem of first-order logic, there exists a first-order $\LL_1(R)\cap\LL_2(R)$-formula $\theta$ such that $\psi\models\theta$ and $\theta\models\neg\chi\vee\forall \vec{x}\neg R\vec{x}$.

\begin{clm}[2] 
$\phi\equiv\theta$.
\end{clm}
\begin{proofclaim}[2]
For any $\LL_1(R)$-model $M$, if $M\models\phi$, then $(M,S_1^M,\dots,S_k^M)\models\psi$ for some relations $S_1^M,\dots,S_k^M$ on $M$. Hence, $M\models\theta$.

Conversely, for any $\LL_1(R)$-model $M$ such that $M\not\models\phi$, we have $R^M\neq\emptyset$ and $M\models\neg\phi\vee \forall \vec{x}\neg R\vec{x}$. The latter implies  $(M,T_1^M,\dots,T_m^M)\models\chi$ for some relations $T_1^M,\dots,T_m^M$ on $M$. It then follows that $(M,T_1^M,\dots,T_m^M)\not\models\neg\chi\vee\forall \vec{x}\neg R\vec{x}$. Hence, $M\not\models\theta$.
\end{proofclaim}

(ii) The nontrivial direction ``$\Longrightarrow$'' follows from a similar and simplified argument. Instead of proving  Claim 1 as in (i), one proves $\psi\models\neg\chi$.
\end{proof}

Now, we are ready to give the proof of \Cref{neg_charac_thm_ind}.

\begin{proof}[Proof of \Cref{neg_charac_thm_ind}]
(i) Let $\phi$ be an \LL-formula  of \Ind. By \Cref{ind2sigma11}(ii) there exists an $\LL(R)$-sentence $\tau_\phi(R)$ of $\Sigma^1_1$ such that for all models $M$ and  teams $X$,
\begin{equation}\label{neg_char_thm_eq1}
M\models_X \cn\phi\iff M\not\models_X\phi\text{ or }X=\emptyset\iff (M,rel(X))\models \neg\tau_\phi(R)\vee\forall \vec{x}\neg R\vec{x}.
\end{equation}

Now, to prove the direction ``$\Longleftarrow$'', assume that $\tau_\phi(R)$ is equivalent to a first-order sentence. Then, the sentence $\neg\tau_\phi(R)$ is also equivalent to a first-order sentence, and thus by \Cref{ind2sigma11}(iv) there exists a formula $\rho(\neg\tau_\phi)$ of \Ind such that for all \LL-models $M$ and  teams $X$,
\[M\models_X\rho(\neg\tau_\phi)\iff (M,rel(X))\models\neg\tau_\phi(R)\vee\forall \vec{x}\neg R\vec{x}.\]
It then follows from (\ref{neg_char_thm_eq1}) that $\rho(\neg\tau_\phi)\equiv \cn\phi$.

Finally, to prove the direction ``$\Longrightarrow$'', assume that $\cn\phi\equiv \psi$ for some formula $\psi$ of \Ind. By \Cref{ind2sigma11}(ii) there exists an $\LL(R)$-sentence $\tau_\psi(R)$ of $\Sigma^1_1$ such that for all  models $M$ and  teams $X$,
\[M\models_X\psi\iff (M,rel(X))\models\tau_\psi(R).\]
By (\ref{neg_char_thm_eq1}) and the correspondence between teams and relations, $\tau_\psi(R)\equiv \neg\tau_\phi(R)\vee\forall \vec{x}\neg R\vec{x}$ and thereby the formula $\neg\tau_\phi(R)\vee\forall \vec{x}\neg R\vec{x}$ belongs to $\Sigma^1_1$. For any model $M$, since $M\models_\emptyset\phi$, we have $(M,\emptyset)\models\tau_\phi(R)$, where $rel(\emptyset)=\emptyset$. Then, by \Cref{neg_char_thm_so}(i), we conclude that $\tau_\phi(R)$ is equivalent to a first-order formula.

(ii) This item is proved by a similar argument that makes use of \Cref{ind2sigma11}(i) and \Cref{neg_char_thm_so}(ii). 
\end{proof}

\section{Axiomatizing negatable consequences in dependence and independence logic}\label{sec:axiom_fo_cns}

Dependence and independence logic are not (effectively) axiomatizable, meaning that  the consequence relation $\Gamma\models\phi$ cannot be effectively axiomatized.
Nevertheless, if we restrict $\Gamma$ to a set of sentences and $\phi$ to a first-order sentence, the consequence relation $\Gamma\models\phi$ is (effectively) axiomatizable and explicit axiomatizations for \D and \Ind are given in \cite{Axiom_fo_d_KV} and \cite{Hannula_fo_ind_13}. Throughout this section, let $\mathsf{L}$ denote one of the logics of \D and \Ind, and $\vdash_{\mathsf{L}}$ denote the syntactic consequence relation associated with the deduction system of $\mathsf{L}$ defined in \cite{Axiom_fo_d_KV} or in \cite{Hannula_fo_ind_13}.

\begin{theorem}[see \cite{Axiom_fo_d_KV,Hannula_fo_ind_13}]\label{ind_fo_cons_axiom}
If $\Gamma$ is a set of sentences  of $\mathsf{L}$, and $\phi$ is a first-order sentence,  then $\Gamma\models\phi\iff \Gamma\vdash_{\mathsf{L}}\phi$. In particular, $\Gamma\models\bot\iff \Gamma\vdash_{\mathsf{L}}\bot$.
\end{theorem}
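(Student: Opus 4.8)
The plan is to obtain this completeness result as a corollary of the two-way translation between the logics and $\Sigma^1_1$ (\Cref{ind2sigma11}(i)) combined with the completeness of a suitable first-order axiomatization. Since the statement is attributed to \cite{Axiom_fo_d_KV} and \cite{Hannula_fo_ind_13}, and the deduction systems $\vdash_{\mathsf{L}}$ are exactly the ones introduced there, the real content is to explain how soundness and completeness reduce to the corresponding first-order facts. I would therefore separate the two directions cleanly.

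For soundness ($\Gamma\vdash_{\mathsf{L}}\phi\Rightarrow\Gamma\models\phi$), the plan is the routine induction on the length of the derivation: one checks that each axiom of the system is valid in team semantics and that each rule preserves the semantic consequence relation $\models$. This is the easy direction and is inherited directly from the rule-by-rule soundness verifications in \cite{Axiom_fo_d_KV} and \cite{Hannula_fo_ind_13}; I would simply cite those and not reproduce the case analysis.

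The substantive direction is completeness ($\Gamma\models\phi\Rightarrow\Gamma\vdash_{\mathsf{L}}\phi$), and here the key move is to pass through $\Sigma^1_1$. Given that $\Gamma$ is a set of sentences of $\mathsf{L}$ and $\phi$ a first-order sentence, I would replace each $\gamma\in\Gamma$ by its $\Sigma^1_1$-translation $\tau_\gamma$ using \Cref{ind2sigma11}(i), so that $M\models\gamma\iff M\models\tau_\gamma$ for every model $M$; since $\phi$ is already first-order, the semantic statement $\Gamma\models\phi$ becomes an entailment $\{\tau_\gamma\mid\gamma\in\Gamma\}\models\phi$ purely at the level of ordinary single-assignment semantics. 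The heart of the argument is to show that whenever such a $\Sigma^1_1$-to-first-order entailment holds, the corresponding $\mathsf{L}$-derivation can be reconstructed inside the system $\vdash_{\mathsf{L}}$. This is precisely the bookkeeping carried out in \cite{Axiom_fo_d_KV} and \cite{Hannula_fo_ind_13}: their systems are engineered so that the natural-deduction rules for the dependence/independence atoms simulate the second-order existential witnesses, and a compactness-plus-G\"odel-completeness argument for first-order logic then delivers a finite first-order proof that can be lifted. I expect the main obstacle to be exactly this lifting step, i.e.\ verifying that the existential-second-order content of the premises is adequately captured by the atom-manipulation rules of $\vdash_{\mathsf{L}}$ so that the first-order completeness theorem transfers; since the statement is quoted from the literature, I would discharge this obstacle by appeal to the cited completeness proofs rather than redoing it.

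Finally, the ``in particular'' clause $\Gamma\models\bot\iff\Gamma\vdash_{\mathsf{L}}\bot$ is the special case $\phi=\bot$: note $\bot$ is a first-order sentence in negation normal form, and $M\models_{\{\emptyset\}}\bot$ never holds, so $\Gamma\models\bot$ expresses unsatisfiability of $\Gamma$. I would simply observe that instantiating the main equivalence at $\phi=\bot$ yields this statement directly, so it requires no separate argument.
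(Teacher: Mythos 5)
Your proposal takes essentially the same approach as the paper: Theorem~\ref{ind_fo_cons_axiom} is stated there purely as an imported result, with no proof beyond the citation of \cite{Axiom_fo_d_KV} and \cite{Hannula_fo_ind_13}, which is exactly where you discharge the substantive completeness direction. Your added scaffolding---soundness by rule-by-rule induction, the reduction of $\Gamma\models\phi$ to an ordinary $\Sigma^1_1$-to-first-order entailment via \Cref{ind2sigma11}(i), and obtaining the $\bot$ clause by instantiation---is correct and consistent with how the cited completeness proofs are organized, so nothing further is needed.
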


Kontinen \cite{Kontinen15foc} generalized the above axiomatization result to cover also the case when $\Gamma\cup\{\phi\}$ is a set of formulas (that possibly contain free variables) by adding a new relation symbol to interpret the teams associated with the free variables. In this section, we will generalize \Cref{ind_fo_cons_axiom} without expanding the signature to cover the case when $\Gamma\cup\{\phi\}$ is a set of formulas (that possibly contain free variables) and $\phi$ is negatable.

First, note that by applying the following (standard) existential quantifier introduction and elimination rule (included in the systems given in  \cite{Axiom_fo_d_KV} and \cite{Hannula_fo_ind_13}):


\begin{center}
\renewcommand{\arraystretch}{1.8}
\begin{tabular}{|C{0.46\linewidth}|C{0.46\linewidth}|}
\hline
\AxiomC{$\phi(t/x)$}\RightLabel{$\exists$\textsf{I}}\UnaryInfC{$\exists x\phi$}\DisplayProof
&\AxiomC{$D_1$}\noLine\UnaryInfC{$\exists x\phi$} \AxiomC{}\noLine\UnaryInfC{$[\phi]$}\noLine\UnaryInfC{$D_2$}\noLine\UnaryInfC{$\psi$} \RightLabel{$\exists$\textsf{E}}\BinaryInfC{$\psi$}\DisplayProof\\
&\footnotesize   $x$ does not occur freely in $\psi$ or  any formula in the undischarged assumptions of the derivation $D_2$\\\hline
\end{tabular}
\end{center}


\noindent under certain constraint the (possibly open) formula $\psi$ in the entailment $\Delta,\psi\vdash_{\mathsf{L}}\theta$ can be turned into a sentence without affecting the entailment relation, as  the lemma below shows.

\begin{lem}\label{form2sent_lm2}
Let $\Delta\cup\{\chi,\theta\}$ be a set of formulas  of $\mathsf{L}$. If the free variables $x_1,\dots,x_n$ of $\chi$ do not occur freely in  $\theta$ or any formula in $\Delta$, then $\Delta,\chi\vdash_{\mathsf{L}}\theta\iff \Delta,\exists x_1\dots\exists x_n\chi\vdash_{\mathsf{L}}\theta$.
\end{lem}
\begin{proof}
Apply the rules $\exists$\textsf{I} and $\exists$\textsf{E}.
\end{proof}

%
%
%
%

To understand intuitively why \Cref{ind_fo_cons_axiom} can be generalized, let us 
consider a set $\Gamma\cup\{\phi\}$ of formulas of $\mathsf{L}$. Since $\mathsf{L}$ is equivalent to $\Sigma^1_1$ by \Cref{ind2sigma11}, and $\Sigma^1_1$ admits the Compactness Theorem, we may assume that $\Gamma$ is a finite set. We shall add to the deduction system of $\mathsf{L}$ the (sound) rule to guarantee that $\Gamma\vdash\phi$ follows from $\Gamma,\cn\phi\vdash\bot$. The latter, by \Cref{form2sent_lm2}, is equivalent to  $\exists \vec{x}(\bigwedge\Gamma\wedge \cn\phi)\vdash\bot$, where $\vec{x}$ lists all free variables in $\Gamma$ and $\phi$. 
Then, the Completeness Theorem can be restated as 
\(\exists \vec{x}(\bigwedge\Gamma\wedge \cn\phi)\not\vdash\bot\Longrightarrow\exists \vec{x}(\bigwedge\Gamma\wedge \cn\phi)\not\models\bot.\)
Assuming that $\exists \vec{x}(\bigwedge\Gamma\wedge \cn\phi)$ is deductively consistent, the problem reduces to the problem of constructing a model for the sentence $\exists \vec{x}(\bigwedge\Gamma\wedge \cn\phi)$, which further reduces to the problem of constructing a model for the $\Sigma^1_1$ sentence $\tau_{\exists \vec{x}(\bigwedge\Gamma\wedge \cn\phi)}$, in case $\cn\phi$ is definable in $\mathsf{L}$.
 Finding a model for $\tau_{\exists \vec{x}(\bigwedge\Gamma\wedge \cn\phi)}$, which we may assume to have the form $\exists\vec{R}\theta$, is the same as finding a model for the first-order formula $\theta$. 
This argument shows that via the trick of weak classical negation, \Cref{ind_fo_cons_axiom} can, in principle, be generalized. Note that if $\Gamma$ is a set of sentences and $\phi$ is a first-order sentence, then $\neg\phi\equiv\cn\phi$ and the foregoing argument reduces to the argument given in \cite{Axiom_fo_d_KV}. 

Let us now make this idea precise. Given the Completeness Theorems in \cite{Axiom_fo_d_KV} and \cite{Hannula_fo_ind_13}, it suffices to extend the systems natural deduction of \cite{Axiom_fo_d_KV} and \cite{Hannula_fo_ind_13} by adding the following (usual) elimination rule for the weak classical negation below to ensure that  $\Gamma\vdash\phi$ follows from  $\Gamma,\cn\phi\vdash\bot$ in case $\phi$ is negatable, where we use $\cn\phi$ as a shorthand for  the (up to equivalence unique) formula $\psi$ in the language of $\mathsf{L}$ such that $\psi\equiv\cn\phi$:
%
%
%
\vspace{-0.5\baselineskip}
\begin{center}
\renewcommand{\arraystretch}{1.8}
\begin{tabular}{|C{0.96\linewidth}|}
\multicolumn{1}{c}{\textbf{NEW RULE}}\\\hline
 \AxiomC{}\noLine\UnaryInfC{$[\cn\phi]$}\noLine\UnaryInfC{$\vdots$}\noLine\UnaryInfC{$\bot$} \RightLabel{\cn\!\textsf{E}}\UnaryInfC{$\phi$}\noLine\UnaryInfC{} \DisplayProof\\\hline
\end{tabular}
\end{center}



Let $\vdash_{\mathsf{L}}^\ast$ denote the syntactic consequence relation associated with the system of $\mathsf{L}$ extended with the rule  \cn\textsf{E}. We now prove the Soundness and Completeness Theorem for this extended system.


\begin{theorem}\label{general_axiom_compl}
If $\Gamma\cup\{\phi\}$ is a set of formulas of $\mathsf{L}$ such that $\phi$ is negatable, then we have $\Gamma\models\phi\iff \Gamma\vdash_{\mathsf{L}}^\ast\phi$.
\end{theorem}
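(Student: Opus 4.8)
The plan is to prove the two directions by routing through the partial completeness theorem for sentences, \Cref{ind_fo_cons_axiom}, using the weak-negation trick described above. Throughout, write $\vdash_{\mathsf{L}}^\ast$ for the extended consequence. Since $\phi$ is negatable, I would first fix an $\mathsf{L}$-formula $\psi$ with $\cn\phi\equiv\psi$ and read the discharged assumption $\cn\phi$ of \cn\textsf{E} as this genuine formula $\psi$ everywhere. For soundness it then suffices to check that the single new rule \cn\textsf{E} preserves $\models$, the other rules being sound by \cite{Axiom_fo_d_KV} and \cite{Hannula_fo_ind_13}; for completeness I would build a semantic bridge that converts $\Gamma\models\phi$ into the unsatisfiability of a single $\mathsf{L}$-sentence to which \Cref{ind_fo_cons_axiom} applies.

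For soundness I would induct on derivations, so the only new obligation is that $\Gamma,\cn\phi\models\bot$ entails $\Gamma\models\phi$. Given $M\models_X\gamma$ for all $\gamma\in\Gamma$, if $X=\emptyset$ then $M\models_X\phi$ by the empty team property; and if $X\neq\emptyset$ but $M\not\models_X\phi$, then $M\models_X\cn\phi$, so $\Gamma,\cn\phi\models\bot$ forces $X=\emptyset$, a contradiction. It is precisely the ``$X=\emptyset$'' clause of $\cn$ together with the empty team property that makes \cn\textsf{E} sound.

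For completeness the central step I would isolate, for finite $\Gamma$ with $\{x_1,\dots,x_n\}=\mathrm{Fv}(\bigwedge\Gamma\wedge\psi)$, is the equivalence
\[
\Gamma\models\phi\quad\Longleftrightarrow\quad \exists x_1\cdots\exists x_n\bigl(\textstyle\bigwedge\Gamma\wedge\psi\bigr)\models\bot.
\]
For the direction needed, if $\Gamma\models\phi$ but some model $M$ satisfied the right-hand sentence, there would be a nonempty team $X=\{\emptyset\}[\vec{F}/\vec{x}]$ with $M\models_X\gamma$ for all $\gamma$ and $M\models_X\psi$; as $\psi\equiv\cn\phi$ and $X\neq\emptyset$ this gives $M\not\models_X\phi$, contradicting $\Gamma\models\phi$. (The converse, which I would prove by restricting a counterexample team to $\{x_1,\dots,x_n\}$ via the locality property and realizing it as $\{\emptyset\}[\vec{F}/\vec{x}]$, is not needed here.) Granting the bridge, \Cref{ind_fo_cons_axiom} turns the right-hand side into $\exists\vec{x}(\bigwedge\Gamma\wedge\psi)\vdash_{\mathsf{L}}\bot$; \Cref{form2sent_lm2} together with the conjunction rules rewrites this as $\Gamma,\cn\phi\vdash_{\mathsf{L}}\bot$; and one application of \cn\textsf{E} discharges $\cn\phi$ to give $\Gamma\vdash_{\mathsf{L}}^\ast\phi$.

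Finally I would reduce the general case to finite $\Gamma$. Since derivations are finite, $\Gamma_0\vdash_{\mathsf{L}}^\ast\phi$ for some finite $\Gamma_0\subseteq\Gamma$ already yields $\Gamma\vdash_{\mathsf{L}}^\ast\phi$, so it is enough to show that $\Gamma\models\phi$ implies $\Gamma_0\models\phi$ for a finite $\Gamma_0$; this compactness of $\models$ I would derive from the compactness theorem for $\Sigma^1_1$ (obtained from first-order compactness by treating the existentially quantified relations as fresh symbols) applied to the translations $\tau_\gamma(R)$ of \Cref{ind2sigma11}(ii), with the team encoded by $R$. The main obstacle is the semantic bridge: one must verify that consequence over arbitrary teams collapses to the unsatisfiability of one sentence on nonempty teams, which hinges on handling the empty-team edge cases correctly, on the locality property, and on the fact that in the lax semantics every nonempty team over $\{x_1,\dots,x_n\}$ is of the form $\{\emptyset\}[\vec{F}/\vec{x}]$. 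Negatability of $\phi$ is what makes all of this legitimate, ensuring that $\cn\phi$ is an actual $\mathsf{L}$-formula and that $\exists\vec{x}(\bigwedge\Gamma\wedge\psi)$ is an $\mathsf{L}$-sentence, so that \Cref{ind_fo_cons_axiom} can be invoked.
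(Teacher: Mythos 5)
Your proposal is correct and takes essentially the same approach as the paper: soundness reduces to checking the single new rule \cn\textsf{E}, and completeness goes via compactness to finite $\Gamma$, then through the chain $\Gamma\models\phi \Rightarrow \exists\vec{x}(\bigwedge\Gamma\wedge\cn\phi)\models\bot \Rightarrow \exists\vec{x}(\bigwedge\Gamma\wedge\cn\phi)\vdash_{\mathsf{L}}\bot$ (by \Cref{ind_fo_cons_axiom}) $\Rightarrow \Gamma,\cn\phi\vdash_{\mathsf{L}}\bot$ (by \Cref{form2sent_lm2}) $\Rightarrow \Gamma\vdash_{\mathsf{L}}^\ast\phi$ (by \cn\textsf{E}), exactly as in the paper. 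The only difference is that you spell out steps the paper leaves implicit: the semantic verification of \cn\textsf{E}, the ``semantic bridge'' using nonempty teams of the form $\{\emptyset\}[\vec{F}/\vec{x}]$, and the $\Sigma^1_1$-compactness justification for restricting to finite $\Gamma$.
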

\begin{proof}
``$\Longleftarrow$'': The soundness of the rules of the systems of $\mathsf{L}$ follows from \cite{Axiom_fo_d_KV} and \cite{Hannula_fo_ind_13}, and the new rule  \cn\!\textsf{E} is clearly sound.



``$\Longrightarrow$'':  
Since $\mathsf{L}$ is compact, we may w.l.o.g. assume $\Gamma$ to be finite. Then, we have
\begin{align*}
\Gamma\models\phi&\Longrightarrow\Gamma,\cn\phi\models\bot\Longrightarrow\exists \bar{x}(\bigwedge\Gamma\wedge\cn \phi)\models\bot\\
&\Longrightarrow \exists \bar{x}(\bigwedge\Gamma\wedge\cn \phi)\vdash_{\mathsf{L}} \bot\tag{by the Completeness Theorem of $\mathsf{L}$, \Cref{ind_fo_cons_axiom}}\\
&\Longrightarrow \Gamma,\cn \phi\vdash_{\mathsf{L}} \bot\tag{by \Cref{form2sent_lm2}}\\
&\Longrightarrow \Gamma\vdash_{\mathsf{L}}^\ast\phi,\tag{by  \cn\!\textsf{E}}
\end{align*}
where $\cn\phi$ is a shorthand for the formula $\psi$ in the language of $\textsf{L}$ such that $\psi\equiv\cn\phi$.
\end{proof}



If $\phi(\vec{x})$ is first-order formula, then  its $\Sigma^1_1$-translation is equivalent to the first-order sentence $\tau_\phi(R)=\forall \vec{x}(R(\vec{x})\to \phi(\vec{x}))$. We thus know by \Cref{neg_charac_thm_ind} that $\phi$ is negatable in \Ind, and also in \D in case $\phi$ is a sentence. This shows that our  \Cref{general_axiom_compl} is indeed a generalization of \Cref{ind_fo_cons_axiom} and also of \cite{Kontinen15foc} for \Ind.

Let us stress here that the new rule $\cn\textsf{E}$ in the extended system is  to be applied only to negatable formulas $\phi$, and the defining formula of $\cn\phi$ in the original language of the logic $\mathsf{L}$ has to be computed before applying the rule. Since the class of negatable formulas is undecidable (as discussed in Section 3) in the first place, the rule $\cn\textsf{E}$ is  not entirely effective. Nevertheless, in the next section we will identify an interesting (decidable) class of negatable formulas in \Ind whose weak classical negations have uniform translations in \Ind. For this and possibly other interesting classes of negatable formulas the rule $\cn\textsf{E}$ is applicable effectively.


We now end this section by providing a uniform translation in \Ind for the weak classical negation $\cn\phi$ of every first-order formula $\phi$. The translation we give in the proposition below is much more efficient and succinct than the translation obtained by going back and forth through the lengthy $\Sigma^1_1$-translation of the formula (i.e. by applying \Cref{ind2sigma11}(ii)(iv), see \cite{Van07dl,Pietro_I/E} for details).



\begin{prop}\label{neg_dual_lm}
If $\phi$ is a first-order formula, then $\cn\phi(\vec{x})\equiv\exists \vec{w}(\vec{w}\subseteq \vec{x}\wedge \neg\phi(\vec{w}))$, where $\vec{w}$ is a sequence of fresh variables. 
\end{prop}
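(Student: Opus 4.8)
The plan is to unfold the team semantics of both sides and to exploit that $\phi$, being first-order, is \emph{flat}, so that $M\models_X\phi$ holds exactly when $M\models_s\phi$ for every $s\in X$; equivalently, $M\not\models_X\phi$ holds exactly when $M\not\models_{s_0}\phi$ for some single $s_0\in X$. Write $\vec{w}=w_1,\dots,w_k$ for fresh variables with $k=|\vec{x}|$, and note that both $\cn\phi$ and $\exists\vec{w}(\vec{w}\subseteq\vec{x}\wedge\neg\phi(\vec{w}))$ have free variables $\vec{x}$, so the equivalence over arbitrary teams is meaningful. I would dispose of the empty team at once: on $X=\emptyset$ both sides hold, $\cn\phi$ by definition and the right-hand side by the empty team property of \Ind. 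Hence in both implications below I may assume $X\neq\emptyset$.

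For $\cn\phi\models\exists\vec{w}(\vec{w}\subseteq\vec{x}\wedge\neg\phi(\vec{w}))$: from $M\models_X\cn\phi$ and $X\neq\emptyset$, flatness supplies some $s_0\in X$ with $M\not\models_{s_0}\phi$; put $\vec{a}=s_0(\vec{x})$. The witnessing choice for the existential is the \emph{constant} function $F_i(s)=\{a_i\}$, which yields the team $Y:=X[\vec{F}/\vec{w}]=\{s(\vec{a}/\vec{w}):s\in X\}$ in which every assignment maps $\vec{w}$ to $\vec{a}$. I then verify both conjuncts on $Y$. The literal $\neg\phi(\vec{w})$ holds at each $t\in Y$ because, by the standard renaming fact for first-order formulas ($\vec{w}$ is fresh) together with locality, $M\models_t\neg\phi(\vec{w})$ reduces to $M\not\models_{s_0}\phi(\vec{x})$, which we have. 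The inclusion atom $\vec{w}\subseteq\vec{x}$ holds because $s_0(\vec{a}/\vec{w})\in Y$ realizes $\vec{a}$ as an $\vec{x}$-value, providing the required witness for every $t\in Y$ (all of which satisfy $t(\vec{w})=\vec{a}$).

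For the converse $\exists\vec{w}(\vec{w}\subseteq\vec{x}\wedge\neg\phi(\vec{w}))\models\cn\phi$: assume the left-hand side holds on a nonempty $X$ with witness $Y=X[\vec{F}/\vec{w}]$, which is nonempty since each $F_i(s)\neq\emptyset$. Fix any $t\in Y$. The inclusion atom gives $t'\in Y$ with $t'(\vec{x})=t(\vec{w})$, and $M\models_Y\neg\phi(\vec{w})$ forces $M\not\models_t\phi(\vec{w})$, hence $M\not\models_{t'}\phi(\vec{x})$ since $t'(\vec{x})=t(\vec{w})$. As $t'$ arises from some $s_0\in X$ via $t'=s_0(\vec{b}/\vec{w})$ and $\vec{w}$ is fresh, $t'$ and $s_0$ agree on $\vec{x}={\rm Fv}(\phi)$, so locality yields $M\not\models_{s_0}\phi$ with $s_0\in X$; flatness then gives $M\not\models_X\phi$, i.e.\ $M\models_X\cn\phi$.

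I expect the single genuinely load-bearing point --- and the reason the inclusion atom appears at all --- to be this last step: $\vec{w}\subseteq\vec{x}$ is exactly what forces the values assigned to $\vec{w}$ to be \emph{realized} as $\vec{x}$-values inside $X$, so that the failure of $\phi$ is transported from an arbitrary $\vec{w}$-value to an honest assignment of $X$. Were the inclusion atom dropped, $\vec{w}$ could take values outside $rel(X)$ and the transfer would fail. The remaining ingredients --- the renaming fact, locality, flatness, and the empty-team bookkeeping --- are routine.
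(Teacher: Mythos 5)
Your proof is correct and matches the paper's own argument essentially step for step: both dispose of the empty team via the empty team property, witness the existential with the constant functions $F_i(s)=\{a_i\}$ built from a single failing assignment $s_0$, and in the converse direction use the inclusion atom $\vec{w}\subseteq\vec{x}$ together with locality and flatness to transport the failure of $\phi$ back to an assignment of $X$. Your closing remark about why the inclusion atom is indispensable is exactly the role it plays in the paper's proof as well.
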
 
\begin{proof}
For any  model $M$ and team $X$, since $\phi$ is flat, we have
\[
M\models_X\cn\phi\iff X=\emptyset\text{ or }M\not\models_X\phi\iff X=\emptyset\text{ or }\exists s\in X(M\not\models_{\{s\}}\phi(\vec{x})).
\]
In view of the empty team property of \Ind , it  suffices to show that for any nonempty team $X$,
\[ \exists s\in X(M\not\models_{\{s\}}\phi(\vec{x}))\iff M\models_X\exists \vec{w}(\vec{w}\subseteq \vec{x}\wedge \neg\phi(\vec{w})).\]

%


``$\Longrightarrow$": Let $\vec{x}=\langle x_1,\dots, x_n\rangle$. Assume $M\not\models_{\{s\}}\phi(\vec{x})$ for some $s\in X$. Define inductively  a constant function $F_i$ for each $1\leq i\leq n$
as follows:
\begin{itemize}
\item $F_1:X\to \wp(M)\setminus\{\emptyset\}$ is defined as $F_1(t)=\{s(x_1)\}$;
\item $F_i:X(F_1/w_1,\dots,F_{i-1}/w_{i-1})\to  \wp(M)\setminus\{\emptyset\}$ is defined as $F_i(t)=\{s(x_i)\}$.
\end{itemize}
Clearly, $X(\vec{F}/\vec{w})=X(s(\vec{x})/\vec{w})$, and thus  $M\models_{X(\vec{F}/\vec{w})}\vec{w}\subseteq \vec{x}$. On the other hand, for any $t\in X(\vec{F}/\vec{w})$, since $t(\vec{w})=s(\vec{x})$ and $M\not\models_{\{s\}}\phi(\vec{x})$, we obtain $M\not\models_{\{t\}}\phi(\vec{w})$ by the locality property, which means $M\models_{\{t\}}\neg\phi(\vec{w})$ by the law of excluded middle for first-order formulas. Hence, we obtain $M\models_{X(\vec{F}/\vec{w})}\neg\phi(\vec{w})$ by the flatness property.

``$\Longleftarrow$'': Conversely, suppose $M\models_X\exists \vec{w}(\vec{w}\subseteq \vec{x}\wedge \neg\phi(\vec{w}))$. Then there exist appropriate functions $F_i$ ($1\leq i\leq n$) for the existential quantifications $\exists\vec{w}$ such that $M\models_{X(\vec{F}/\vec{w})}\vec{w}\subseteq \vec{x}$ and $M\models_{X(\vec{F}/\vec{w})}\neg\phi(\vec{w})$. The latter implies that $M\not\models_{\{t\}}\phi(\vec{w})$ for an arbitrary $t\in X(\vec{F}/\vec{w})$. On the other hand, by the former, there exists $s'\in X(\vec{F}/\vec{w})$ such that $s'(\vec{x})=t(\vec{w})$. For $s=s'\upharpoonright {\rm dom}(X)\in X$, we have $s(\vec{x})=s'(\vec{x})=t(\vec{w})$, which implies $M\not\models_{\{s\}}\phi(\vec{x})$ by the locality property.
\end{proof}

\section{A hierarchy of negatable atoms}

In this section, we define an interesting class of formulas that are negatable in \Ind. This class will be presented in the form of an alternating hierarchy of atoms that are definable in \Ind, and the weak classical negation of each such atom also falls in this hierarchy. These atoms are closely related to  the \emph{dependency notions} considered in \cite{Galliani2014SFOD}, and the \emph{generalized dependence atoms} studied in \cite{Kuusisto2015} and \cite{KontinenMullerSchnoorVollmer2014}. We will demonstrate that all first-order formulas, dependence atoms, independence atoms and inclusion atoms belong to this class. At the end of the section, we will also show that the set of negatable formulas is closed under Boolean connectives and the weak quantifiers. 
For all this type of negatable formulas, the Completeness Theorem of \Ind we obtained in the previous section applies, and in this section we will also give  uniform translations for the weak classical negations of these negatable formulas in \Ind.


Let us start by defining the notion of abstract  relation. A $k$-ary \emph{abstract relation} $R$  is a class of pairs $(M,R^M)$ that is closed under isomorphisms, where $M$ ranges over first-order models and $R^M\subseteq M^k$. For instance, the familiar equality $=$ is a binary abstract relation defined by the class
\[\{ (M, =^M)\mid M\text{ is a first-order model} \},\text{ where }=^M:=\{(a,a)\mid a\in M\}.\]
Every first-order formula $\phi(x_1,\dots,x_k)$ with $k$ free variables induces a $k$-ary abstract relation 
\[\pmb{\phi}:=\{(M,\pmb{\phi}^M)\mid M\text{ is a first-order model}\},\]
where $\pmb{\phi}^M:=\{(s(x_1),\dots,s(x_k))\mid M\models_s\phi\}$.
A $k$-ary abstract relation $R$ is said to be \emph{(first-order) definable} if there exists a (first-order) formula $\phi_R(w_1,\dots,w_k)$ 
such that for all models $M$ and  assignments $s$,
\[s(\vec{w})\in R^M \iff M\models_{s}\phi_{R}(\vec{w}).\] 
Clearly, the first-order formula $w=u$ defines the abstract equality relation, and every first-order formula $\phi$ defines its associated abstract relation $\pmb{\phi}$. 

If $R$ is a $k$-ary abstract relation, then we write $\overline{R}$ for the complement of $R$ that is defined by letting $\overline{R}^M=M^k\setminus R^M$ for all models $M$. Clearly, if a first-order formula $\phi$ defines $R$, then its negation $\neg\phi$ defines $\overline{R}$.

If $\vec{s}=\langle s_1,\dots,s_k\rangle$, then we write $\vec{s}(\vec{x})$ for $\langle s_1(\vec{x}),\dots,s_k(\vec{x})\rangle$. For every sequence $\mathsf{k}=\langle k_1,\dots,k_n\rangle$ of natural numbers and every  $(k_1+\dots+k_n)\cdot m$-ary abstract relation $R$, we introduce two new atomic formulas $\Sigma_{n,\mathsf{k}}^R(x_1,\dots,x_m)$ and $\Pi_{n,\mathsf{k}}^R(x_1,\dots,x_m)$  with the semantics  defined as follows:
\begin{itemize}
\item $M\models_\emptyset\Sigma_{n,\mathsf{k}}^R(\vec{x})$ and $M\models_\emptyset\Pi_{n,\mathsf{k}}^R(\vec{x})$.

\item If $n$ is odd, then  define for any model $M$ and any \emph{nonempty} team $X$,
\begin{itemize}
\item $M\models_X\Sigma_{n,\mathsf{k}}^R(\vec{x})$ ~iff~ there exist $s_{11},\dots,s_{1k_1}\in X$ such that  for all $s_{21},\dots,s_{2k_2}\in X$, there exist $\dots$  there exist $s_{n1},\dots,s_{nk_n}\in X$ such that $(\vec{s_1}(\vec{x}),\dots,\vec{s_{n}}(\vec{x}))\in R^M$;
\item $M\models_X\Pi_{n,\mathsf{k}}^R(\vec{x})$ ~iff~ for all $s_{11},\dots,s_{1k_1}\in X$, there exist $s_{21},\dots,s_{2k_2}\in X$ such that  for all $\dots$  for all $s_{n1},\dots,s_{nk_n}\in X$, it holds that $(\vec{s_1}(\vec{x}),\dots,\vec{s_{n}}(\vec{x}))\in R^M$.
\end{itemize}


\item Similarly if $n$ is even.
\end{itemize}

\begin{fact}\label{neg_sig_neg_pi}
$\cn\Sigma_{n,\mathsf{k}}^R(\vec{x})\equiv \Pi_{n,\mathsf{k}}^{\overline{R}}(\vec{x})$ and $\cn\Pi_{n,\mathsf{k}}^R(\vec{x})\equiv \Sigma_{n,\mathsf{k}}^{\overline{R}}(\vec{x})$.
\end{fact}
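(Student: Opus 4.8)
The plan is to unfold the two semantic clauses and recognize the statement as nothing more than the de Morgan duality of quantifier prefixes ranging over the (nonempty) team $X$, combined with the trivial fact that $(\vec{s_1}(\vec{x}),\dots,\vec{s_n}(\vec{x}))\notin R^M$ is the same as $(\vec{s_1}(\vec{x}),\dots,\vec{s_n}(\vec{x}))\in\overline{R}^M$. First I would reduce the two claimed equivalences to the single one $\cn\Sigma_{n,\mathsf{k}}^R(\vec{x})\equiv\Pi_{n,\mathsf{k}}^{\overline{R}}(\vec{x})$, established for an arbitrary relation $R$. Granting this, substituting $\overline{R}$ for $R$ and using $\overline{\overline{R}}=R$ gives $\cn\Sigma_{n,\mathsf{k}}^{\overline{R}}(\vec{x})\equiv\Pi_{n,\mathsf{k}}^{R}(\vec{x})$; applying $\cn$ to both sides (it is a semantic operator, so it respects $\equiv$) and using that $\cn\cn\psi\equiv\psi$ whenever $\psi$ has the empty team property, which $\Sigma_{n,\mathsf{k}}^{\overline{R}}$ does by its defining clause, then yields $\cn\Pi_{n,\mathsf{k}}^{R}(\vec{x})\equiv\Sigma_{n,\mathsf{k}}^{\overline{R}}(\vec{x})$, the second equivalence. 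The identity $\cn\cn\psi\equiv\psi$ is itself a one-line check: $M\models_X\cn\cn\psi$ holds iff $X=\emptyset$ or ($X\neq\emptyset$ and $M\models_X\psi$), which collapses to $M\models_X\psi$ precisely because $M\models_\emptyset\psi$ always holds.

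To prove $\cn\Sigma_{n,\mathsf{k}}^R\equiv\Pi_{n,\mathsf{k}}^{\overline{R}}$, I would fix a model $M$ and a team $X$ and argue by cases. If $X=\emptyset$, then $M\models_X\cn\Sigma_{n,\mathsf{k}}^R$ holds by the definition of weak negation while $M\models_X\Pi_{n,\mathsf{k}}^{\overline{R}}$ holds by the empty team clause of the atom, so both sides are satisfied. If $X\neq\emptyset$, then by the definition of $\cn$ we have $M\models_X\cn\Sigma_{n,\mathsf{k}}^R$ iff $M\not\models_X\Sigma_{n,\mathsf{k}}^R$. Writing out the defining clause of $\Sigma_{n,\mathsf{k}}^R$ for $n$ odd, namely the prefix $\exists s_{11}\dots s_{1k_1}\in X\,\forall s_{21}\dots s_{2k_2}\in X\cdots\exists s_{n1}\dots s_{nk_n}\in X$ applied to the matrix $(\vec{s_1}(\vec{x}),\dots,\vec{s_n}(\vec{x}))\in R^M$, and negating it by the usual first-order quantifier duality over the nonempty range $X$, each block flips between $\exists$ and $\forall$ and the matrix becomes $(\vec{s_1}(\vec{x}),\dots,\vec{s_n}(\vec{x}))\in\overline{R}^M$. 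The resulting prefix $\forall s_{11}\dots\exists s_{21}\dots\cdots\forall s_{n1}\dots$ with this matrix is precisely the defining clause of $\Pi_{n,\mathsf{k}}^{\overline{R}}$ on the nonempty team $X$, which gives the equivalence.

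The case in which $n$ is even is handled identically, the only change being the bookkeeping of which quantifier opens and closes the prefix; the ``similarly if $n$ is even'' convention of the definition makes the $\Sigma$- and $\Pi$-clauses dual under negation in the same uniform way. I expect no genuine obstacle: the argument is a mechanical de Morgan computation, and the only points demanding a moment's care are (a) tracking the alternation pattern as a function of the parity of $n$, so that the negated $\Sigma$-prefix matches the $\Pi$-prefix block for block, and (b) isolating the empty team as a separate case, since it is exactly the empty team clause of the atoms, together with the corresponding disjunct in the definition of $\cn$, that reconciles both sides there and that makes $\cn\cn$ act as the identity in the reduction of the second equivalence to the first.
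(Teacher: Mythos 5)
Your proof is correct. The paper states this Fact without any proof, treating it as immediate from the definitions, and your argument---splitting off the empty-team case, applying de Morgan duality to the quantifier prefix over a nonempty $X$ together with $(\cdots)\notin R^M\iff(\cdots)\in\overline{R}^M$, and reducing the second equivalence to the first via $\cn\cn\psi\equiv\psi$ for formulas with the empty team property---is precisely the routine verification the paper leaves implicit.
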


Let us now give some examples of  the  $\Sigma_{n,\mathsf{k}}^R$ and $\Pi_{n,\mathsf{k}}^R$ atoms.

\begin{exmp}\label{neg_atom_example}
\begin{description}
\item[(a)] The dependence atom $\dep(x_1,\dots,x_k,y)$ is the $\Pi^{\mathbf{dep}_k}_{1,\langle 2\rangle}(x_1,\dots,x_k,y)$ atom, where $\mathbf{dep}_k$ is the $2(k+1)$-ary abstract relation defined as
\[(a_1,\dots,a_k,b,a_1',\dots,a_k',b')\in( \mathbf{dep}_k)^M ~\text{ iff }~a_1,\dots,a_k=a_1',\dots,a_k'\Longrightarrow b=b'.\]
Clearly,  $\mathbf{dep}_k$ is definable by the  first-order formula 
\[\phi_{\mathbf{dep}_k}=\big(w_1=w_1'\wedge \dots\wedge w_k=w_k'\big)\to u=u'.\]
\item[(b)] The independence atom $x_1,\dots,x_k\perp_{z_1,\dots,z_n} y_1,\dots,y_m$ is the \vspace{-0.5\baselineskip}
\[\Pi^{\mathbf{ind}_{kmn}}_{2,\langle 2,1\rangle}(x_1,\dots,x_k,y_1,\dots,y_m,z_1,\dots,z_n)\] 
atom, where $\mathbf{ind}_{kmn}$ is the (first-order definable) $(2+1)(k+m+n)$-ary abstract  relation defined as 
\((\vec{a},\vec{b},\vec{c},\vec{a'},\vec{b'},\vec{c'},\vec{a''},\vec{b''},\vec{c''})\in (\mathbf{ind}_{kmn})^M\) iff\vspace{-0.25\baselineskip}
\begin{align*}
(c_n,\dots,c_n)=(c_1',&\dots,c_n')\Longrightarrow[\,c_1,\dots,c_n=c_1'',\dots,c_n'',\\
&  a''_1,\dots,a''_k=a_1,\dots,a_k\text{ and }b''_1,\dots,b''_m=b_1',\dots,b_m'\,].
\end{align*}
\item[(c)] The inclusion atom $x_1,\dots,x_k\subseteq y_1,\dots,y_k$ is the $\Pi^{\mathbf{inc}_k}_{2,\langle 1,1\rangle}(x_1,\dots,x_k,y_1,\dots,y_k)$ atom, where $\mathbf{inc}_k$ is the (first-order definable) $(1+1)2k$-ary abstract relation defined as\vspace{-0.25\baselineskip}
\[(a_1,\dots,a_k,b_1,\dots,b_k,a_1',\dots,a_k',b_1',\dots,b_k')\in (\mathbf{inc}_k)^M~\text{ iff }~a_1,\dots,a_k=b_1',\dots,b_k'.\]
\item[(d)] Every first-order formula $\phi(x_1,\dots,x_k)$ is the $\Pi^{\pmb{\phi}}_{1,\langle 1\rangle}(x_1,\dots,x_k)$ atom, where $\pmb{\phi}$ is the (first-order definable) $1\cdot k$-ary abstract relation defined as\vspace{-0.25\baselineskip}
\[(a_1,\dots,a_k)\in \pmb{\phi}^M~\text{ iff }~M\models_{s_{\vec{a}}}\phi\text{ where }s_{\vec{a}}(x_i)=a_i\text{ for all }i.\]
\end{description}
\end{exmp}

In what follows, let $\mathsf{k}=\langle k_1,\dots,k_n\rangle$ be an arbitrary sequence   of natural numbers, $\vec{x}=\langle x_1,\dots,x_m\rangle$  an arbitrary sequence of variables, and $R$ an arbitrary $(k_1+\dots+k_n)m$-ary abstract relation. 
Suppose $R$ is definable by a formula $\phi_R(\vv{\mathsf{w}_1},\dots,\vv{\mathsf{w}_n})$, where $\vv{\mathsf{w}_i}=\langle \mathsf{w}_{i,1},\dots,\mathsf{w}_{i,k_i}\rangle$ and $\mathsf{w}_{i,j}=\langle w_{i,j,1},\dots,w_{i,j,m} \rangle$. 
The $\Sigma_{n,\mathsf{k}}^R(\vec{x})$ and $\Pi_{n,\mathsf{k}}^R(\vec{x})$ atoms can be translated into second-order logic in the same manner as in \Cref{ind2sigma11}. For instance, if $n$ is odd,  let $S$ be a fresh $m$-ary relation symbol and let $\tau_{\Sigma^R_{n,\mathsf{k}}}(S):=$\vspace{-0.5\baselineskip}
\[
\begin{array}{rl}
\exists \vv{\mathsf{w}_1}\Big(S(\mathsf{w}_{1,1})\wedge\dots\wedge &\!\!\!\!\!\!S(\mathsf{w}_{1,k_1})\wedge \forall \vv{\mathsf{w}_2}\Big(S(\mathsf{w}_{2,1})\wedge\dots\wedge S(\mathsf{w}_{2,k_2})\to\exists \vv{\mathsf{w}_3}\cdots\\
&\cdots\exists \vv{\mathsf{w}_n}\Big(S(\mathsf{w}_{n,1})\wedge\dots\wedge S(\mathsf{w}_{n,k_n})\wedge\phi_R(\vv{\mathsf{w}_1},\dots,\vv{\mathsf{w}_n})\underbrace{\Big)\cdots\Big)\Big)}_{n}.
\end{array}
\]
It is easy to verify that  $M\models_X\Sigma_{n,\mathsf{k}}^R(\vec{x})\iff (M,rel(X))\models\tau_{\Sigma_{n,\mathsf{k}}^R}(S)$ for any model $M$ and team $X$. If $\phi_R(\vv{\mathsf{w}_1},\dots,\vv{\mathsf{w}_n})$ is a first-order formula, i.e., if $R$ is first-order definable, then $\tau_{\Sigma_{n,\mathsf{k}}^R}(S)$ is a first-order sentence. Therefore, by \Cref{neg_charac_thm_ind}(i), the $\Sigma_{n,\mathsf{k}}^R(\vec{x})$ and $\Pi_{n,\mathsf{k}}^R(\vec{x})$ atoms with first-order definable $R$ are negatable in \Ind. 

In order to apply the rules of the extended deduction system defined in Section 4 to derive the $\Sigma_{n,\mathsf{k}}^R(\vec{x})$ and $\Pi_{n,\mathsf{k}}^R(\vec{x})$ consequences in \Ind, one needs to 
compute the formulas that are equivalent to the weak classical negations of the $\Sigma_{n,\mathsf{k}}^R(\vec{x})$ and $\Pi_{n,\mathsf{k}}^R(\vec{x})$ atoms in the original language of \Ind.
This can be done by applying \Cref{neg_sig_neg_pi} and going back and forth through  the  $\Sigma^1_1$-translation, which is however inefficient. In what follows, we will give a direct  definition of the atoms $\Sigma_{n,\mathsf{k}}^R(\vec{x})$ and $\Pi_{n,\mathsf{k}}^R(\vec{x})$ in the original language of \Ind.



For each $1\leq i\leq n$, recall $\vv{\mathsf{w}_i}=\langle\mathsf{w}_{i,1},\dots, \mathsf{w}_{i,k_i}\rangle$, and define
\begin{itemize}
\item $\displaystyle\mathsf{inc}(\vv{\mathsf{w}_{i}};\vec{x})\,:=\,\bigwedge_{j=1}^{k_i}\mathsf{w}_{i,j}\subseteq \vec{x}$,
\item $\displaystyle\mathsf{pro}(\vv{\mathsf{w}_1},\dots,\vv{\mathsf{w}_{i-1}};\vec{x};\vv{\mathsf{w}_{i}})\,:=$

\hfill$\displaystyle \vv{\mathsf{w}_1}\dots\vv{\mathsf{w}_{i-1}}\perp\vv{\mathsf{w}_{i}}\,\wedge\bigwedge_{j=1}^{k_i}\big(\vec{x}\subseteq \mathsf{w}_{i,j}\wedge\, \langle \mathsf{w}_{i,j'}\mid j'\neq j\rangle\!\perp\mathsf{w}_{i,j}\big)$\footnote{If $i=1$, then $\vv{\mathsf{w}_1}\dots\vv{\mathsf{w}_{i-1}}$ denotes the empty sequence $\langle\rangle$, and we stipulate $\langle\rangle\!\perp\vec{y}:=\top$ for any  $\vec{y}$.}.
\end{itemize}
Define then inductively formulas $\sigma_i$ and $\pi_i$ as follows:

\begin{itemize}
   \setlength{\itemsep}{2pt}
    \setlength{\parskip}{2pt}
    \setlength{\parsep}{1pt}
\item $\sigma_{1}[\vec{x},\phi_R(\vv{\mathsf{w}_1},\dots,\vv{\mathsf{w}_n})]:=~ \exists\vv{\mathsf{w}_n}\Big(  \mathsf{inc}(\vv{\mathsf{w}_{n}};\vec{x})\wedge\phi_R(\vv{\mathsf{w}_1},\dots,\vv{\mathsf{w}_n})\Big)$,
\item $\pi_{1}[\vec{x},\phi_R(\vv{\mathsf{w}_1},\dots,\vv{\mathsf{w}_n})]:=\exists \vv{\mathsf{w}_n}\Big(\mathsf{pro}(\vv{\mathsf{w}_1},\dots,\vv{\mathsf{w}_{n-1}};\vec{x};\vv{\mathsf{w}_{n}})\wedge\phi_R(\vv{\mathsf{w}_1},\dots,\vv{\mathsf{w}_n})\Big)$,
\item $\sigma_{i+1}[\vec{x},\phi_R(\vv{\mathsf{w}_1},\dots,\vv{\mathsf{w}_n})]:=\exists\vv{\mathsf{w}_{n-i}}\Big(  \mathsf{inc}(\vv{\mathsf{w}_{n-i}};\vec{x})\wedge \pi_{i}[\vec{x},\phi_R(\vv{\mathsf{w}_1},\dots,\vv{\mathsf{w}_n})]\Big)$,
\item $\pi_{i+1}[\vec{x},\phi_R(\vv{\mathsf{w}_1},\dots,\vv{\mathsf{w}_n})]\displaystyle:=\exists \vv{\mathsf{w}_{n-i}}\Big(\mathsf{pro}(\vv{\mathsf{w}_1},\dots,\vv{\mathsf{w}_{n-i-1}};\vec{x};\vv{\mathsf{w}_{n-i}})$

\hfill$ \wedge\, \sigma_{i}[\vec{x},\phi_R(\vv{\mathsf{w}_1},\dots,\vv{\mathsf{w}_n})]\Big)$.

\end{itemize}

\begin{theorem}\label{sigma_pi_df_ind}
Let $R$ and $\phi_R$ be as above. Then 
\begin{itemize}
\item $\Sigma_{n,\mathsf{k}}^R(\vec{x})\equiv \sigma_n[\vec{x},\phi_R(\vv{\mathsf{w}_1},\dots,\vv{\mathsf{w}_n})]$,
\item $\Pi_{n,\mathsf{k}}^R(\vec{x})\equiv \pi_n[\vec{x},\phi_R(\vv{\mathsf{w}_1},\dots,\vv{\mathsf{w}_n})]$.
\end{itemize}
\end{theorem}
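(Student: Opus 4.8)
The plan is to reduce the statement to a purely combinatorial fact about the relation $A:=\{s(\vec{x})\mid s\in X\}\subseteq M^m$ and then prove a strengthened version by induction on the number of quantifier blocks. First I would dispose of the empty team: if $X=\emptyset$ both sides hold by the empty team property of \Ind together with the first clause in the semantics of $\Sigma^R_{n,\mathsf{k}}$ and $\Pi^R_{n,\mathsf{k}}$, so I may assume $X\neq\emptyset$. By the locality property, satisfaction of every formula involved depends on $X$ only through the values its assignments take, and since each quantifier ``$\exists s_{ij}\in X$'' / ``$\forall s_{ij}\in X$'' uses only $s_{ij}(\vec{x})$, the semantics of $\Sigma^R_{n,\mathsf{k}}(\vec{x})$ (resp.\ $\Pi^R_{n,\mathsf{k}}(\vec{x})$) is literally the alternating statement $\exists\vec{a}_1\in A^{k_1}\,\forall\vec{a}_2\in A^{k_2}\cdots[(\vec{a}_1,\dots,\vec{a}_n)\in R^M]$ (resp.\ with a leading $\forall$). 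The core is then the following claim $S(i)$, proved by induction on $i$ with $1\le i\le n$: for any nonempty team $Y$ whose domain contains $\vec{x}$ and the variables of $\vv{\mathsf{w_1}},\dots,\vv{\mathsf{w_{n-i}}}$, writing $A=\{s(\vec{x})\mid s\in Y\}$ and $B=\{s(\vv{\mathsf{w_1}}\cdots\vv{\mathsf{w_{n-i}}})\mid s\in Y\}$, one has $M\models_Y\sigma_i[\vec{x};\phi_R]$ iff for every $\vec{b}\in B$,
\[\exists\vec{a}_{n-i+1}\in A^{k_{n-i+1}}\,\forall\vec{a}_{n-i+2}\in A^{k_{n-i+2}}\cdots[(\vec{b},\vec{a}_{n-i+1},\dots,\vec{a}_n)\in R^M],\]
and $M\models_Y\pi_i[\vec{x};\phi_R]$ iff the same holds with the leading block made universal. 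Taking $i=n$ and $Y=X$, so that $B=\{\langle\rangle\}$, yields the theorem. The only ``setup'' invariant I carry along is that $\{s(\vec{x})\mid s\in Y\}=A$ is unchanged on passing to any $Y[\vec{F}/\vv{\mathsf{w}}]$, since existential quantification only duplicates rows and never alters the $\vec{x}$-columns.

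For the base case $i=1$ I would unfold the two definitions directly. For $\sigma_1=\exists\vv{\mathsf{w_n}}\big(\mathsf{inc}(\mathsf{w_{n,1}},\dots,\mathsf{w_{n,k_n}};\vec{x})\wedge\phi_R\big)$, the conjunct $\mathsf{w_{n,j}}\subseteq\vec{x}$ forces the chosen values into $A$, so under the lax semantics, choosing for each row of context $\vec{b}$ a witness $\vec{a}_n\in A^{k_n}$ with $(\vec{b},\vec{a}_n)\in R^M$ proves ``$\Leftarrow$'', while reading off the $\vv{\mathsf{w_n}}$-value of a row of context $\vec{b}$ proves ``$\Rightarrow$''. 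For $\pi_1=\exists\vv{\mathsf{w_n}}\big(\mathsf{pro}(\dots)\wedge\phi_R\big)$ the three conjuncts of $\mathsf{pro}$ do the work: $\vec{x}\subseteq\mathsf{w_{n,j}}$ forces each $\mathsf{w_{n,j}}$ to range over a set containing $A$; the internal independence atoms $\langle\mathsf{w_{n,j'}}\mid j'\neq j\rangle\perp\mathsf{w_{n,j}}$ force block $n$ to realize a full product, hence $A^{k_n}$ is contained in its projection; and the cross atom $\vv{\mathsf{w_1}}\cdots\vv{\mathsf{w_{n-1}}}\perp\mathsf{w_{n,1}}\cdots\mathsf{w_{n,k_n}}$ makes that product independent of the context, so $Y[\vec{F}/\vv{\mathsf{w_n}}]$ contains, for every $\vec{b}\in B$ and every $\vec{a}_n\in A^{k_n}$, a single row carrying both. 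Flatness of $\phi_R$ then converts ``$\phi_R$ on all rows'' into ``$(\vec{b},\vec{a}_n)\in R^M$ for all $\vec{b}\in B$, $\vec{a}_n\in A^{k_n}$'', which is the required universal clause.

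In the inductive step I would treat $\pi_{i+1}=\exists\vv{\mathsf{w_{n-i}}}\big(\mathsf{pro}(\vv{\mathsf{w_1}},\dots,\vv{\mathsf{w_{n-i-1}}};\vec{x};\mathsf{w_{n-i,\cdot}})\wedge\sigma_i\big)$, the case $\sigma_{i+1}$ being symmetric and easier (it uses $\mathsf{inc}$, hence per-context choices in $A$, matching an existential block). Exactly as in the base case, the $\mathsf{pro}$ gadget guarantees that the context set of $Y[\vec{F}/\vv{\mathsf{w_{n-i}}}]$ contains $B\times A^{k_{n-i}}$, and choosing $\vec{F}$ so that block $n-i$ ranges over precisely $A^{k_{n-i}}$ makes it \emph{equal} to $B\times A^{k_{n-i}}$. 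Applying the induction hypothesis $S(i)$ to $\sigma_i$ on this new team, and rewriting ``for all $\vec{c}\in B\times A^{k_{n-i}}$'' as ``for all $\vec{b}\in B$, for all $\vec{a}_{n-i}\in A^{k_{n-i}}$'', produces exactly the universal-leading clause for $\pi_{i+1}$. The construction direction uses the exact product, whereas the extraction direction only needs that the realized context contains $B\times A^{k_{n-i}}$.

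The hard part will be pinning down and maintaining the right invariant through the induction — in particular verifying that the independence atoms inside $\mathsf{pro}$ genuinely realize the Cartesian product $B\times A^{k_{n-i}}$ on a \emph{single} team, regardless of the arbitrary internal structure of the previously built context $B$, and that the inclusion atoms, which only guarantee that each range \emph{contains} $A$ rather than equals it, nonetheless suffice: one must exploit the ``$\supseteq A$'' bound in the extraction direction and deliberately pick the range to be exactly $A^{k}$ in the construction direction. Once this asymmetry is handled cleanly, the remaining bookkeeping — matching the quantifier alternation of $\sigma_i,\pi_i$ to that of $\Sigma^R_{n,\mathsf{k}},\Pi^R_{n,\mathsf{k}}$, and invoking locality to reduce team satisfaction to the value set $B$ — is routine.
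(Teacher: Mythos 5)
Your proposal is correct, and it reaches the theorem by a genuinely different organization than the paper's proof. The paper factors everything through \Cref{gen_constr_f_simp}: it explicitly constructs ``simulating'' functions (per-row choices of witnesses, serving the $\mathsf{inc}$/existential blocks) and ``duplicating'' functions (full duplication of all $\vec{x}$-values, serving the $\mathsf{pro}$/universal blocks), verifies once and for all that the resulting teams satisfy $\mathsf{inc}$ and $\mathsf{pro}$, and then proves the theorem by iterating these constructions (``repeat the argument $n$ times''), carrying explicit teams $Y_1,\dots,Y_n$ and assignments $\vv{s_{t,i}}$ through the iteration, with a case split on the parity of $n$ and on $\Sigma$ versus $\Pi$. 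You instead prove a strengthened claim $S(i)$ by induction on the number of remaining quantifier blocks, abstracting a team to its value sets $(A,B)$ via locality; the theorem is the instance $i=n$, $B=\{\langle\rangle\}$. The two arguments rest on exactly the same gadget analysis: $\mathsf{inc}$ pins block values inside $A$; the one-vs-rest independence atoms in $\mathsf{pro}$ force the new block to realize a full product (this needs a small induction on the number of columns, using projection of independence atoms) and the cross atom makes that product independent of the context; flatness of $\phi_R$ converts row-wise satisfaction into membership in $R^M$; and both must exploit the construction/extraction asymmetry you flag (build with range exactly $A^{k}$, read off using only range $\supseteq A^{k}$, which is harmless because the characterization is universal over the realized context set). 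What your route buys is rigor and economy: the informal ``repeat $n$ times'' becomes a genuine induction with a clean invariant (extending a team never alters the $\vec{x}$-columns, so $A$ is preserved), and $\sigma_i$ and $\pi_i$ are handled simultaneously, so no parity or $\Sigma$/$\Pi$ case split is needed. What the paper's route buys is concreteness and reuse: your construction steps (per-context witness choices; full duplication) are precisely the simulating and duplicating functions of \Cref{gen_constr_f_simp}, and your in-line claim that $\mathsf{pro}$ realizes $B\times A^{k}$ on a single team is exactly what that lemma establishes, so in a complete write-up you would end up re-proving its content inside your induction.
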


Before proving the above theorem, let us consider some examples.  As discussed in \Cref{neg_atom_example}(a), the dependence atom $\dep(x_1,\dots,x_k,y)$ is a $\Pi^{\mathbf{dep}_k}_{1,\langle 2\rangle}(\vec{x}y)$. By \Cref{neg_sig_neg_pi}, the weak classical negation $\cn\dep(\vec{x},y)$ of the dependence atom is the atom $\Sigma^{\overline{\mathbf{dep}_k}}_{1,\langle 2\rangle}(\vec{x}y)$, which, according to the above theorem, is expressible in \Ind by the  formula
\begin{equation}\label{neg_dep_frm}
\sigma_1[\vec{x}y,\phi_{\overline{\mathbf{dep}_k}}]=\exists \vec{w}_1v_1\vec{w}_2v_2(\vec{w}_1v_1\subseteq \vec{x}y\,\wedge\,\vec{w}_2v_2\subseteq \vec{x}y\,\wedge\, \vec{w}_1=\vec{w}_2\,\wedge\, v_1\neq v_2).
\end{equation}
The weak classical negation of independence atoms and inclusion atoms  (which are $\Sigma_{2,\mathsf{k}}^R$ atoms) can be computed from the above theorem in the same way. There are also more succinct definitions for these negated atoms than the ones obtained by applying the above theorem:
\begin{equation}\label{neg_ind_smpdf}
\cn\vec{x}\perp_{\vec{z}}\vec{y}\equiv \exists^1\vec{u}\vec{v}\vec{w}(\vec{u}\vec{w}\subseteq \vec{x}\vec{z}\,\wedge\, \vec{v}\vec{w}\subseteq \vec{y}\vec{z}\,\wedge\, \vec{u}\vec{v}\vec{w}\neq\vec{x}\vec{y}\vec{z}),
\end{equation}
\[\cn \vec{x}\subseteq\vec{y}\equiv\exists^1 \vec{z}(\vec{z}\subseteq \vec{x}\,\wedge\, \vec{z}\neq\vec{y}),\]
where the quantifier $\exists^1$ is defined as $\exists^1x\phi:=\exists x(\dep(x)\wedge\phi)$ (and we will come back to this quantifier later in the section). 
Finding simpler translations for other $\Sigma_{n,\mathsf{k}}^R$ and $\Pi_{n,\mathsf{k}}^R$ atoms with small $n$ is left as future work.

Now, to prove \Cref{sigma_pi_df_ind}, let us first prove a lemma concerning the crucial subformulas $\mathsf{inc}$ and $\mathsf{pro}$ in the $\sigma_n$ and $\pi_n$ formulas. In the following, for any team $Y$ of a model $M$, we write  $Y[\vec{x}]$ for the image of $Y$ on $\vec{x}$, i.e., $Y[\vec{x}]=\{s(\vec{x})\mid s\in Y\}$. For any set $A\subseteq M^n$ of tuples in $M$ and sequence $\mathsf{w}=\langle w_1,\dots,w_n\rangle$ of variables, define $Y(A/\mathsf{w})=\{s(a_1/w_1)\dots(a_n/w_n)\mid s\in X,~\langle a_1,\dots,a_n\rangle\in A\}$. We  abbreviate $X(A/\mathsf{w}_1)\cdots(A/\mathsf{w}_k)$ as $X(A/\mathsf{w}_1,\cdots,\mathsf{w}_k)$, and write simply $X(\mathsf{a}/\mathsf{w})$ for  $X(\{\mathsf{a}\}/\mathsf{w})$.



\begin{lem}\label{inc_pro_fact}
Let $1\leq i\leq n$, $M$ a model and $Y$ a nonempty team.
\begin{description}
\item[(i)] $M\models_Y\mathsf{inc}(\vv{\mathsf{w}_{i}};\vec{x})$ iff $Y[\mathsf{w}_{i,j}]\subseteq Y[\vec{x}]$ for all $1\leq j\leq k_i$. 
\item[(ii)]  If 
\begin{equation}\label{inc_pro_fact_eq00}
Y(Y[\vec{x}]/\mathsf{w}_{i,1},\dots,\mathsf{w}_{i,k_i})[\vv{\mathsf{w}_{1}},\dots,\vv{\mathsf{w}_{i}}]=Y[\vv{\mathsf{w}_{1}},\dots,\vv{\mathsf{w}_{i}}],
\end{equation}
then $M\models_Y\mathsf{pro}(\vv{\mathsf{w}_1},\dots,\vv{\mathsf{w}_{i-1}};\vec{x};\vv{\mathsf{w}_{i}})$.

For the converse direction, if $M\models_Y\mathsf{pro}(\vv{\mathsf{w}_1},\dots,\vv{\mathsf{w}_{i-1}};\vec{x};\vv{\mathsf{w}_{i}})$, then 
\begin{equation}\label{inc_pro_fact_eq0}
Y(Y[\vec{x}]/\vv{\mathsf{w}_{i}})[\vv{\mathsf{w}_{1}},\dots,\vv{\mathsf{w}_{i}}]\subseteq Y[\vv{\mathsf{w}_{1}},\dots,\vv{\mathsf{w}_{i}}].
\end{equation}
\end{description}
\end{lem}
\begin{proof}
Item (i) is obvious. For item (ii), the direction that (\ref{inc_pro_fact_eq00}) implies the clause $M\models_Y\mathsf{pro}(\vv{\mathsf{w}_1},\dots,\vv{\mathsf{w}_{i-1}};\vec{x};\vv{\mathsf{w}_{i}})$ is clear. We only give a detailed proof for the other direction.

Suppose that $M\models_Y\mathsf{pro}(\vv{\mathsf{w}_1},\dots,\vv{\mathsf{w}_{i-1}};\vec{x};\vv{\mathsf{w}_{i}})$. To prove (\ref{inc_pro_fact_eq0}), for an arbitrary element $\langle \vv{\mathsf{a}_{1}},\dots,\vv{\mathsf{a}_{i}}\rangle\in Y(Y[\vec{x}]/\mathsf{w}_{i,1},\dots,\mathsf{w}_{i,k_i})[\vv{\mathsf{w}_{1}},\dots,\vv{\mathsf{w}_{i}}]$, we show that $\langle \vv{\mathsf{a}_{1}},\dots,\vv{\mathsf{a}_{i}}\rangle\in Y[\vv{\mathsf{w}_{1}},\dots,\vv{\mathsf{w}_{i}}]$ as well. By definition,  for each $1\leq j\leq k_i$, there exists $s_j\in Y$ such that $s_j(\vec{x})=\mathsf{a}_{i,j}$. Moreover, since $M\models_Y\vec{x}\subseteq \mathsf{w}_{i,j}$, there exists $s_j'\in Y$ such that $s_j'(\mathsf{w}_{i,j})=s_j(\vec{x})=\mathsf{a}_{i,j}$. 

Now, since $M\models_Y\langle \mathsf{w}_{i,1},\mathsf{w}_{i,3},\dots,\mathsf{w}_{i,k_i}\rangle\perp \mathsf{w}_{i,2}$, we can find $s''_2\in Y$ such that $s''_2(\mathsf{w}_{i,1},\mathsf{w}_{i,2})=s_1'(\mathsf{w}_{i,1})s_2'(\mathsf{w}_{i,2})$. Also, since $M\models_Y\langle \mathsf{w}_{i,1},\mathsf{w}_{i,2},\mathsf{w}_{i,4},\dots,\mathsf{w}_{i,k_i}\rangle\perp \mathsf{w}_{i,3}$, there exists $s''_3\in Y$ such that 
\[s''_3(\mathsf{w}_{i,1},\mathsf{w}_{i,2},\mathsf{w}_{i,3})=s''_2(\mathsf{w}_{i,1},\mathsf{w}_{i,2})s'_3(\mathsf{w}_{i,3})=s_1'(\mathsf{w}_{i,1})s_2'(\mathsf{w}_{i,2})s_3'(\mathsf{w}_{i,3}).\] 
Proceeding in a similar way we find in the end an $s''_{k_i}\in Y$ such that 
\begin{equation*}
s''_{k_i}(\vv{\mathsf{w}_{i}})=s_1'(\mathsf{w}_{i,1})\dots s_{k_i}'(\mathsf{w}_{i,k_i})=s_1(\vec{x})\dots s_{k_i}(\vec{x})=\mathsf{a}_{i,1}\dots\mathsf{a}_{i,k_i}=\vv{\mathsf{a}_i}.
\end{equation*}

Now, the assumption $\langle \vv{\mathsf{a}_{1}},\dots,\vv{\mathsf{a}_{i}}\rangle\in Y(Y[\vec{x}]/\mathsf{w}_{i,1},\dots,\mathsf{w}_{i,k_i})[\vv{\mathsf{w}_{1}},\dots,\vv{\mathsf{w}_{i}}]$ also implies that there is $s\in Y$ such that $s(\vv{\mathsf{w}_{1}},\dots,\vv{\mathsf{w}_{i-1}})=\langle \vv{\mathsf{a}_{1}},\dots,\vv{\mathsf{a}_{i-1}}\rangle$. Since  $M\models_Y \vv{\mathsf{w}_1}\dots\vv{\mathsf{w}_{i-1}}\perp\vv{\mathsf{w}_{i}}$, it follows that there exists $s'\in Y$ such that 
\[s'(\vv{\mathsf{w}_{1}},\dots,\vv{\mathsf{w}_{i-1}},\vv{\mathsf{w}_{i}})=s(\vv{\mathsf{w}_{1}},\dots,\vv{\mathsf{w}_{i-1}})s''_{k_i}(\vv{\mathsf{w}_{i}})=\langle \vv{\mathsf{a}_{1}},\dots,\vv{\mathsf{a}_{i-1}},\vv{\mathsf{a}_{i}}\rangle.\]
From this we conclude that $\langle \vv{\mathsf{a}_{1}},\dots,\vv{\mathsf{a}_{i}}\rangle\in Y[\vv{\mathsf{w}_{1}},\dots,\vv{\mathsf{w}_{i-1}},\vv{\mathsf{w}_{i}}]$.
\end{proof}


Having proved the above lemma, let us remark that R\"{o}nnholm  introduced in \cite{Ronnholm18} the so-called {\em inclusion  quantifiers} $(\exists \vec{w}\subseteq\vec{x})$ and $(\forall \vec{w}\subseteq\vec{x})$, which are closely related to our $\sigma_n$ and $\pi_n$ formulas. The existential inclusion quantification $(\exists \vec{w}\subseteq\vec{x})\phi$ is defined to be true on a team $X$ in a model $M$,  if and only if there exists a sequence  $\vec{F}$ of functions for the standard existential quantifications $\exists\vec{w}$  such that the team $X(\vec{F}/\vec{x})$ generated by these functions should satisfy $\phi$ and also  respect the inclusion atom $\vec{w}\subseteq\vec{x}$, namely, $M\models_{X(\vv{F}/\vec{w})}(\vec{w}\subseteq\vec{x})\wedge \phi$. 
It is then straightforward to see 
that 
\[\exists \mathsf{w}_{i,j}(\mathsf{inc}(\mathsf{w}_{i,j};\vec{x})\wedge\phi)\equiv(\exists \mathsf{w}_{i,j}\subseteq\vec{x})\phi.\]
On the other hand, a universal inclusion quantification $(\forall \vec{w}\subseteq\vec{x})\phi$ is defined to be true on a team $X$, if and only if the team $X(X[\vec{x}]/\vec{w})$ satisfies $\phi$. It then follows  from \Cref{inc_pro_fact}(ii) that 
\[(\forall \mathsf{w}_{i,1}\subseteq\vec{x})\dots(\forall \mathsf{w}_{i,k_i}\subseteq\vec{x})\phi\models\exists \vv{\mathsf{w}_{i}}(\mathsf{pro}(\vv{\mathsf{w}_1},\dots,\vv{\mathsf{w}_{i-1}};\vec{x};\vv{\mathsf{w}_{i}}) \wedge \phi).\]
Without going into further detail we point out that our $\sigma_n$ and $\pi_n$ formulas can be expressed in terms of the inclusion quantifiers, as, e.g., 
\[
\sigma_2[\vec{x},\phi_R(\mathsf{w}_1,\mathsf{w}_2)]\equiv(\exists \mathsf{w}_{1}\subseteq \vec{x})(\forall \mathsf{w}_{2}\subseteq \vec{x})\phi_R(\mathsf{w}_1,\mathsf{w}_2).
\]
We refer the reader to Section 3.3.3 of the  dissertation of R\"{o}nnholm \cite{Ronnholm_thesis} for further discussions, and now we turn to the proof of \Cref{sigma_pi_df_ind}.

\begin{proof}[Proof of \Cref{sigma_pi_df_ind}]
We only give the detailed proof for $\Sigma_{n,\mathsf{k}}^R(x_1,\dots,x_m)$ when $n$ is odd. The other cases can be proved analogously.

Suppose $M\models_X\sigma_n[\vec{x},\phi_R(\vv{\mathsf{w}_1},\dots,\vv{\mathsf{w}_n})]$ for some model $M$ and nonempty team $X$, where 
\begin{align*}
 \sigma_n[\vec{x},\phi_R(\vv{\mathsf{w}_1},\dots,\vv{\mathsf{w}_n})]:=\exists\vv{\mathsf{w}_1}\Big( \mathsf{inc}(\vv{\mathsf{w}_{1}};\vec{x})&\wedge \exists \vv{\mathsf{w}_2}\Big(\mathsf{pro}(\vv{\mathsf{w}_1};\vec{x};\vv{\mathsf{w}_{2}})
\wedge\cdots\,\cdots\wedge\\
&~~~~\exists\vv{\mathsf{w}_n}\Big(  \mathsf{inc}(\vv{\mathsf{w}_{n}};\vec{x})\wedge\phi_R(\vv{\mathsf{w}_1},\dots,\vv{\mathsf{w}_n})\underbrace{\Big)\cdots\Big)\Big)}_{n}.
\end{align*}
Let $\vv{F_1},\dots,\vv{F_n}$ be a sequence of functions for the existential quantifications $\exists \vv{\mathsf{w}_1},\dots,\exists\vv{\mathsf{w}_n}$ in $\sigma_n$ and $Y=X(\vv{F_1},\dots,\vv{F_n}/\vv{\mathsf{w}_1},\dots,\vv{\mathsf{w}_n})$ such that  $M\models_Y\mathsf{inc}(\vv{\mathsf{w}_{i}};\vec{x})$ for any odd $i\leq n$, $M\models_Y\mathsf{pro}(\vv{\mathsf{w}_{1}},\dots,\vv{\mathsf{w}_{i}};\vec{x};\vv{\mathsf{w}_{i+1}})$ for any odd  $i<n$,  
and $M\models_Y\phi_R(\vv{\mathsf{w}_1},\dots,\vv{\mathsf{w}_n})$.


To show that $M\models_X\Sigma_{n,\mathsf{k}}^R(\vec{x})$, take any $t\in Y$. Since $M\models_Y \mathsf{inc}(\mathsf{w}_{1,1},\dots,\mathsf{w}_{1,k_1};\vec{x})$, there exist $s_{1,1},\dots,s_{1,k_1}\in X$ such that 
\[s_{1,1}(\vec{x})=t(\mathsf{w}_{1,1}),\dots,s_{1,k_1}(\vec{x})=t(\mathsf{w}_{1,k_1}).\]
Let $s_{2,1},\dots,s_{2,k_2}\in X$ be arbitrary. Since $M\models_Y\mathsf{pro}(\vv{\mathsf{w}_1};\vec{x};\mathsf{w}_{2,1},\dots,\mathsf{w}_{2,k_2})$,  we have by \Cref{inc_pro_fact}(ii) that 
\[Y(Y[\vec{x}]/\mathsf{w}_{2,1},\dots,\mathsf{w}_{2,k_2})[\vv{\mathsf{w}_1},\vv{\mathsf{w}_2}]\subseteq Y[\vv{\mathsf{w}_1},\vv{\mathsf{w}_2}],\] 
which implies that 
there exist $t_2\in Y$ such that 
\[t_2(\vv{\mathsf{w}_1})=t(\vv{\mathsf{w}_1})=\vv{s_{1}}(\vec{x})\text{ and }t_2(\mathsf{w}_{2,1})=s_{2,1}(\vec{x}),\dots,t_2(\mathsf{w}_{2,k_2})=s_{2,k_2}(\vec{x}).\]

Repeat the argument $n$ times to find in the same manner the  assignments $\vv{s_{3}}\in X^{k_3},\vv{s_{5}}\in X^{k_5},\dots,\vv{s_{n}}\in X^{k_n}$ and  the corresponding assignments $t_4,t_6,\dots,t_{n-1}\in Y$ for arbitrary $\vv{s_{4}}\in X^{k_4},\vv{s_{6}}\in X^{k_6},\dots,\vv{s_{n-1}}\in X^{k_{n-1}}$. In the last step we have 
\[t_{n-1}(\vv{\mathsf{w}_1})=\vv{s_{1}}(\vec{x}),\dots,t_{n-1}(\vv{\mathsf{w}_{n-1}})=\vv{s_{n-1}}(\vec{x})\]
and there exist $s_{n,1},\dots,s_{n,k_n}\in X$ such that 
\[s_{n,1}(\vec{x})=t_{n-1}(\mathsf{w}_{n,1}),\dots,s_{n,k_n}(\vec{x})=t_{n-1}(\mathsf{w}_{n,k_n}).\]
Since $M\models_Y\phi_R(\vv{\mathsf{w_1}},\dots,\vv{\mathsf{w_n}})$, we have $M\models_{\{t_{n-1}\}}\phi_R(\vv{\mathsf{w_1}},\dots,\vv{\mathsf{w_n}})$ by the downward closure property of first-order formulas. Since the  formula $\phi_R$ defines $R$, we conclude \[(t_{n-1}(\vv{\mathsf{w}_1}),\dots,t_{n-1}(\vv{\mathsf{w}_n}))\in R^M\text{, thereby }
(\vv{s_1}(\vec{x}),\dots,\vv{s_{n}}(\vec{x}))\in R^M.\]

\vspace{0.5\baselineskip}

Conversely, suppose $M\models_X\Sigma_{n,\mathsf{k}}^R(\vec{x})$ for some model $M$ and nonempty team $X$. Then
\begin{equation}\label{hier_lm_eq1}
(\exists \vv{s_1}\in X^{k_1})(\forall\vv{s_2}\in X^{k_2})\cdots \cdots( \exists \vv{s_{n}}\in X^{k_n}) (\vv{s_1}(\vec{x}),\dots,\vv{s_{n}}(\vec{x}))\in R^M.
\end{equation}

To show that $M\models_X\sigma_n[\vec{x},\phi_R(\vv{\mathsf{w}_1},\dots,\vv{\mathsf{w}_n})]$, 
consider $s_{1,1},\dots,s_{1,k_1}\in X$. One can define a sequence $\vv{F_{1,1}},\dots,\vv{F_{1,k_1}}$ of functions for the existential quantifications $\exists \mathsf{w}_{1,1}\dots\exists \mathsf{w}_{1,k_1}$ in $\sigma_n$ such that 
\[X(\vv{F_{1,1}},\dots,\vv{F_{1,k_1}}/\mathsf{w}_{1,1},\dots,\mathsf{w}_{1,k_1})=X(s_{1,1}(\vec{x}),\dots,s_{1,k_1}(\vec{x})/\mathsf{w}_{1,1},\dots,\mathsf{w}_{1,k_1}).
\]
Put $Y_1=X(\vv{F_{1,1}},\dots,\vv{F_{1,k_1}}/\mathsf{w}_{1,1},\dots,\mathsf{w}_{1,k_1})$. 
Clearly,  we have $M\models_{Y_1}\mathsf{w}_{1,j}\subseteq \vec{x}$ for all $1\leq j\leq k_1$, namely, $M\models_{Y_1}\mathsf{inc}(\vv{\mathsf{w}_{1}};\vec{x})$. It then remains to show that $M\models_{Y_1} \pi_{n-1}[\vec{x},\phi_R(\vv{\mathsf{w}_1},\dots,\vv{\mathsf{w}_n})]$.

Now, consider the quantifications $\exists\mathsf{w_{2,2}}\dots\exists\mathsf{w_{2,k_2}}$ in $\pi_{n-1}$. One can define a sequence $\vv{F_{2,1}},\dots,\vv{F_{2,k_2}}$ of functions such that  
\begin{equation}\label{hier_lm_eq24}
Y_1(\vv{F_{2,1}},\dots,\vv{F_{2,k_2}}/\mathsf{w}_{2,1},\dots,\mathsf{w}_{2,k_2})=Y_1(X[\vec{x}]/\mathsf{w}_{2,1},\dots,\mathsf{w}_{2,k_2}). 
\end{equation}
Put $Y_2=Y_1(\vv{F_{2,1}},\dots,\vv{F_{2,k_2}}/\mathsf{w}_{2,1},\dots,\mathsf{w}_{2,k_2})$. 
Since
\(Y_2(Y_2[\vec{x}]/\vv{\mathsf{w}_{2}})=Y_1(X[\vec{x}]/\vv{\mathsf{w}_{2}})=Y_2,\)
we obtain by \Cref{inc_pro_fact}(ii) that  $M\models_{Y_2}\mathsf{pro}(\vv{\mathsf{w}_1};\vec{x};\vv{\mathsf{w}_{2}})$. It then remains to show that $M\models_{Y_2} \sigma_{n-2}[\vec{x};\phi_R(\vv{\mathsf{w}_1},\dots,\vv{\mathsf{w}_n})]$. 

For each $t\in Y_2$, by  (\ref{hier_lm_eq24}), there exists $\vv{s_{t,2}}=\langle s^t_{2,1},\dots,s^t_{2,k_2}\rangle\in X^{k_2}$  such that 
\[s^t_{2,1}(\vec{x})=t(\mathsf{w}_{2,1}),\dots,s^t_{2,k_2}(\vec{x})=t(\mathsf{w}_{2,k_2}).\]
Hence, by (\ref{hier_lm_eq1}), there exists  $\vv{s_{t,3}}=\langle s^t_{3,1},\dots,s^t_{3,k_3}\rangle\in X^{k_3}$ such that
\[(\forall\vv{s_4}\in X^{k_4})\cdots \cdots( \exists \vv{s_{n}}\in X^{k_n}) (\vv{s_1}(\vec{x}),\vv{s_{t,2}}(\vec{x}),\vv{s_{t,3}}(\vec{x}),\vv{s_4}(\vec{x})\dots,\vv{s_{n}}(\vec{x}))\in R^M.\]
One can define a sequence $\vv{F_{3,1}},\dots,\vv{F_{3,k_3}}$ of functions for the existential quantifications $\exists\mathsf{w}_{3,1}\dots\exists\mathsf{w}_{3,k_3}$ in $\sigma_{n-2}$ such that for all $t\in Y_3=Y_2(\vv{F_{3,1}},\dots,\vv{F_{3,k_3}}/\mathsf{w}_{3,1},\dots,\mathsf{w}_{3,k_3})$,
\[t(\mathsf{w}_{3,1})=s^{t}_{3,1}(\vec{x}),\dots,t(\mathsf{w}_{3,k_3})=s^{t}_{3,k_3}(\vec{x}),\]
where we use the same notation $t$ to denote also  $t\upharpoonright {\rm dom}(Y_2)\in Y_2$. 
This definition implies immediately that  $M\models_{Y_3}\mathsf{inc}(\vv{\mathsf{w}_{3}};\vec{x})$.
It then remains to show that $M\models_{Y_3} \pi_{n-3}[\vec{x};\phi_R(\vv{\mathsf{w}_1},\dots,\vv{\mathsf{w}_n})]$. 


For the existential quantifications $\exists\mathsf{w}_{4,1}\dots\exists\mathsf{w}_{4,k_4}$ in $\pi_{n-3}$, we construct the sequence $\vv{F_{4,1}},\dots,\vv{F_{4,k_4}}$  of functions such that $Y_4=Y_3(\vv{F_{4,1}},\dots,\vv{F_{4,k_4}}/\mathsf{w}_{4,1},\dots,\mathsf{w}_{4,k_4})=Y_3(X[\vec{x}]/\vv{\mathsf{w}_4})$, and proceed in the same way as above.
Repeat the same argument $n$ times. In the last step we have the team $Y_n$  defined and $M\models_{Y_n}\mathsf{inc}(\vv{\mathsf{w}_{n}};\vec{x})$. It then only remains to show that $M\models_{Y_n}\phi_R(\vv{\mathsf{w}_1},\dots,\vv{\mathsf{w}_n})$.
Since $\phi_R$ is flat, this reduces to showing that $M\models_{\{t\}}\phi_R$ holds for any $t\in Y_n$. By the definition of $Y_n$, we have
\[(\vv{s_1}(\vec{x}),\vv{s_{t,2}}(\vec{x}),\vv{s_{t,3}}(\vec{x}),\vv{s_{t,4}}(\vec{x})\dots,\vv{s_{t,n}}(\vec{x}))\in R^M\]
\[\text{and }
t(\vv{\mathsf{w}_1})=\vv{s_1}(\vec{x}),~t(\vv{\mathsf{w}_2})=\vv{s_{t,2}}(\vec{x}),~\dots,~t(\vv{\mathsf{w}_n})=\vv{s_{t,n}}(\vec{x}),\]
which imply $M\models_{\{t\}}\phi_R(\vv{\mathsf{w}_1},\dots,\vv{\mathsf{w}_n})$, as the first-order formula $\phi_R$ defines $R$.
\end{proof}


Having identified a hierarchy of negatable formulas, a natural question to ask is whether it is possible to extend the hierarchy in some way while keeping the negatability of the formulas. Let us now try to provide an answer to this question. 


Recall that the $\Sigma^1_1$-translations for the disjunction $\cor$, the existential quantifier $\exists$ and universal quantifier $\forall$  as given in \cite{Van07dl} are as follows:
\begin{itemize}
\item $\tau_{\phi\cor \psi}(R)=\exists S\exists S'(\tau_\phi(S)\wedge \tau_\psi(S')\wedge \forall \vec{x}(R\vec{x}\to (S\vec{x}\vee S'\vec{x})))$,
\item $\tau_{\exists x\phi(x,\vec{y})}(R)=\exists S(\tau_\phi(S)\wedge\forall \vec{y}(R\vec{y}\to \exists x Sx\vec{y}))$,
\item $\tau_{\forall x\phi(x,\vec{y})}(R)=\exists S(\tau_\phi(S)\wedge\forall \vec{y}(R\vec{y}\to \forall x Sx\vec{y}))$.
\end{itemize}
These translations can well be non-first-order sentences, and thus none of $\cor$, $\exists$ and $\forall$ preserve negatability. Nevertheless, in the literature there are other variants of these logical constants, under which the set of negatable formulas is closed, such as the disjunction $\ior$ (introduced in \cite{AbVan09}, known in the literature by the name \emph{intuitionistic disjunction} or \emph{Boolean disjunction}), the (weak) existential quantifier $\exists^1$ and the (weak) universal quantifier $\forall^1$ (introduced in \cite{KontVan09}) whose semantics are defined as follows:
\begin{itemize}
 \item {\em $M\models_X\phi\ior\psi$ ~iff~ $M\models_X\phi$ or $M\models_X\psi$.}
 \item {\em $M\models_X\exists^1x\phi$ ~iff~  $M\models_{X(a/x)}\phi$ for some $a\in M$.}
\item {\em $M\models_X\forall^1x\phi$ ~iff~ $M\models_{X(a/x)}\phi$ for all $a\in M$.}
\end{itemize}
Clearly, for all formulas $\phi$ of \D or \Ind, all models $M$ and all teams $X$,
\[M\models_X\phi\ior\psi\iff (M,rel(X))\models\tau_\phi(R)\vee\tau_\psi(R),\]
\[M\models_X\exists^1x\phi\iff (M,rel(X))\models\exists x\tau_\phi(R)\]
and
\[M\models_X\forall^1x\phi\iff (M,rel(X))\models\forall x\tau_\phi(R).\]
In view of this, \Cref{ind2sigma11} can be generalized to cover extensions of \D and \Ind with these three logical constants, and these three logical constants are then definable in \D and in \Ind \footnote{Moreover, $\exists^1$ and $\ior$ are uniformly definable in \D and in \Ind, since  $\exists^1x\phi\equiv \exists x(\dep(x)\wedge\phi)$ and 
\(\phi\ior\psi\equiv \exists w\exists u(\dep(w)\wedge \dep(u)\wedge (w=u\,\cor \phi)\wedge (w\neq u\,\cor \psi))\), where $w,u$ are fresh variables.}.
If $\tau_\phi(R)$ and $\tau_\psi(R)$ are both first-order, then the above three $\Sigma^1_1$-translations $\tau_\phi(R)\vee\tau_\psi(R)$, $\exists x\tau_\phi(R)$ and $\forall x\tau_\phi(R)$ are first-order as well. By \Cref{neg_char_thm_so}, this shows that the logical constants $\ior$, $\exists^1$ and $\forall^1$ preserve negatability. Furthermore, it is easy to see that
\begin{itemize}
\item $\cn(\phi\wedge\psi)\equiv \cn\phi\ior \cn\psi$, \quad $\cn(\phi\ior\psi)\equiv \cn\phi\wedge \cn\psi$,
\item $\cn\exists^1x\phi\equiv\forall^1x\cn\phi$~~~ and ~~~$\cn\forall^1x\phi\equiv\exists^1x\cn\phi$.
\end{itemize}

Without going into detail we remark that making use of the  disjunction $\ior$ the extended system of \Ind can be applied to give a new formal proof of Arrow's Impossibility Theorem \cite{Arrow51} in social choice theory.  In \cite{PacuitYang2016} the theorem is formulated as an entailment $\Gamma_{\textsf{Arrow}}\models\phi_{\textsf{dictator}}$ in independence logic, where $\Gamma_{\textsf{Arrow}}$ is a set of formulas expressing  the conditions in Arrow's Impossibility Theorem and $\phi_{\textsf{dictator}}$ is a formula expressing the existence of a dictator. The formula $\phi_{\textsf{dictator}}$ is of the form $\bigior_{i=1}^n\phi_i$, where $\phi_i$ is a first-order formula expressing that voter $i$ is a dictator (among $n$ voters).
By what we just obtained, the formula $\phi_{\textsf{dictator}}$ is negatable in \Ind and the Completeness Theorem guarantees that $\Gamma_{\textsf{Arrow}}\vdash_{\Ind}^\ast\phi_{\textsf{dictator}}$ is derivable in our extended system.

\section{Armstrong's axioms and the Geiger-Paz-Pearl axioms}

Dependence and independence atoms, being members of the hierarchy  defined in the previous section (see \Cref{neg_atom_example}), are negatable in independence logic. Therefore, Armstrong's Axioms \cite{Armstrong_Axioms} that characterize dependence atoms and the Geiger-Paz-Pearl axioms \cite{Geiger-Paz-Pearl_1991} that characterize independence atoms are derivable in our extended system of independence logic. In this section, we provide the derivations of these axioms in order to illustrate the power of our extended system.

Throughout this section we denote by $\vdash$ the syntactic consequence relation associated with the extended system of \Ind defined in Section 4. The crucial rules from \cite{Hannula_fo_ind_13} that we will apply in our derivations are $\exists$\text{I}, $\exists$\textsf{E}, the usual rules for identity ($=$) and the following ones, where we write $\mathsf{x}, \mathsf{y}, \mathsf{z},\dots$ for arbitrary  sequences of variables:
\begin{center}
\renewcommand{\arraystretch}{1.8}
\def\ScoreOverhang{0.5pt}
\def\defaultHypSeparation{\hskip .1in}
\begin{tabular}{|C{0.45\linewidth}|C{0.45\linewidth}|}
\hline
\AxiomC{}\RightLabel{$\subseteq$\textsf{Id}}\UnaryInfC{$\mathsf{x}\subseteq\mathsf{x}$}\DisplayProof
&\AxiomC{}\noLine\UnaryInfC{$\mathsf{x}\subseteq\mathsf{y}$}\AxiomC{$\mathsf{y}\subseteq\mathsf{z}$}\RightLabel{$\subseteq$\textsf{Trs}}\BinaryInfC{$\mathsf{x}\subseteq\mathsf{z}$}\noLine\UnaryInfC{}\DisplayProof\\\hline
\AxiomC{}\noLine\UnaryInfC{$x_1\dots x_n\subseteq y_1\dots y_n$}\RightLabel{$\subseteq$\textsf{Pro}}\UnaryInfC{$x_{i_1}\dots x_{i_k}\subseteq y_{i_1}\dots y_{i_k}$}\DisplayProof&\AxiomC{}\noLine\UnaryInfC{$\mathsf{y}\subseteq\mathsf{x}$}\AxiomC{$\alpha(\mathsf{x})$}\RightLabel{$\subseteq$\textsf{Cmp}}\BinaryInfC{$\alpha(\mathsf{y}/\mathsf{x})$}\DisplayProof\\[-2pt]
{\footnotesize where $\{i_1,\dots,i_k\}\subseteq \{1,\dots,n\}$.}&{\footnotesize where $\alpha$ is first-order.}\\\hline
\multicolumn{2}{|C{0.96\linewidth}|}{\AxiomC{}\noLine\UnaryInfC{$\mathsf{x}\perp_{\mathsf{z}}\mathsf{y}$} \AxiomC{$\mathsf{w}_1\mathsf{u}_1\mathsf{v}_1\mathsf{t}_1\subseteq \mathsf{x}\mathsf{y}\mathsf{z}\mathsf{s}$}\AxiomC{$\mathsf{w}_2\mathsf{u}_2\mathsf{v}_2\mathsf{t}_2\subseteq \mathsf{x}\mathsf{y}\mathsf{z}\mathsf{s}$} \RightLabel{$\perp$\textsf{E}}\TrinaryInfC{$\exists \mathsf{w}_3\mathsf{u}_3\mathsf{v}_3\mathsf{t}_3\big(\mathsf{w}_3\mathsf{u}_3\mathsf{v}_3\mathsf{t}_3\subseteq \mathsf{x}\mathsf{y}\mathsf{z}\mathsf{s}\wedge (\mathsf{v}_1=\mathsf{v}_2\to \mathsf{w}_3\mathsf{u}_3\mathsf{v}_3=\mathsf{w}_1\mathsf{u}_2\mathsf{v}_2)\big)$}\DisplayProof}\\
\multicolumn{2}{|C{0.96\linewidth}|}{\footnotesize where $\mathsf{v}_1=\mathsf{v}_2\to \mathsf{w}_3\mathsf{u}_3\mathsf{v}_3=\mathsf{w}_1\mathsf{u}_2\mathsf{v}_2$ is short for $\mathsf{v}_1\neq\mathsf{v}_2\vee \mathsf{w}_3\mathsf{u}_3\mathsf{v}_3=\mathsf{w}_1\mathsf{u}_2\mathsf{v}_2$.}\\\hline
\end{tabular}
\end{center}
To simplify the derivations, we will also use the following handy  (sound) weakening rule for inclusion atoms that was introduced essentially in \cite{HannulaKontinen16}: 

\begin{center}
\renewcommand{\arraystretch}{1.8}
\def\ScoreOverhang{0.5pt}
\def\defaultHypSeparation{\hskip .1in}
\begin{tabular}{|C{0.45\linewidth}|C{0.45\linewidth}|}
\hline
\multicolumn{2}{|C{0.96\linewidth}|}{ \AxiomC{}\noLine\UnaryInfC{$\mathsf{x}\subseteq\mathsf{y}$}\RightLabel{$\subseteq$\textsf{W}}\UnaryInfC{$\exists \mathsf{w}(\mathsf{x}\mathsf{w}\subseteq\mathsf{y}\mathsf{z})$}\noLine\UnaryInfC{} \DisplayProof}\\\hline
\end{tabular}
\end{center}
By applying the rule $\subseteq$\textsf{W} we can easily derive a more general version of the $\perp$\textsf{E} rule:
\[\AxiomC{}\noLine\UnaryInfC{$\mathsf{x}\mathsf{x}'\perp_{\mathsf{z}}\mathsf{y}\mathsf{y}'$} \AxiomC{$\mathsf{w_1}\mathsf{v_1}\subseteq \mathsf{x}\mathsf{z}$}\AxiomC{$\mathsf{u_2}\mathsf{v_2}\subseteq \mathsf{y}\mathsf{z}$} \RightLabel{$\perp$\textsf{E}}\TrinaryInfC{$\exists \mathsf{w_3}\mathsf{u_3}\mathsf{v}_3\mathsf{t}_3\big(\mathsf{w_3}\mathsf{u_3}\mathsf{v}_3\mathsf{t}_3\subseteq \mathsf{x}\mathsf{y}\mathsf{z}\mathsf{s}\wedge (\mathsf{v_1}=\mathsf{v_2}\to \mathsf{w_3}\mathsf{u_3}\mathsf{v}_3=\mathsf{w_1}\mathsf{u_2}\mathsf{v_2})\big)$}\DisplayProof\]


We first derive Armstrong's Axioms. To state these axioms in full generality, we now introduce a generalized version of dependence atoms, atoms $\dep(\mathsf{x},\mathsf{y})$ that can have a sequence of variables in the last coordinate. The semantics  of $\dep(\mathsf{x},\mathsf{y})$ is defined as:
\begin{itemize}
\item {\em $M\models_X\dep(\mathsf{x},\mathsf{y})$ ~iff~ for all $s,s'\in X$, if $s(\mathsf{x})=s'(\mathsf{x})$, then $s(\mathsf{y})=s'(\mathsf{y})$.}
\end{itemize}
Clearly, $\dep(\mathsf{x},\mathsf{y})\equiv\mathsf{y}\perp_{\mathsf{x}}\mathsf{y}$, and we thus interpret the dependence atom $\dep(\mathsf{x},\mathsf{y})$ in \Ind as  the independence atom $\mathsf{y}\perp_{\mathsf{x}}\mathsf{y}$. 

\begin{exmp}
The following clauses, known as Armstrong's Axioms \cite{Armstrong_Axioms}, are derivable in the extended system of \Ind.
\begin{enumerate}[(1)]
\item $\vdash\dep(\mathsf{x},\mathsf{x})$
\item $\dep(\mathsf{x},\mathsf{y},\mathsf{z})\vdash\dep(\mathsf{y},\mathsf{x},\mathsf{z})$
\item $\dep(\mathsf{x},\mathsf{x},\mathsf{y})\vdash\dep(\mathsf{x},\mathsf{y})$
\item $\dep(\mathsf{y},\mathsf{z})\vdash\dep(\mathsf{x},\mathsf{y},\mathsf{z})$
\item $\dep(\mathsf{x},\mathsf{y}),\dep(\mathsf{y},\mathsf{z})\vdash\dep(\mathsf{x},\mathsf{z})$
\end{enumerate}
\end{exmp}
\begin{proof}
We only give the detailed derivation for item (5). By the rule \cn\!\textsf{E}, it suffices to derive $\dep(\mathsf{x},\mathsf{y}),\dep(\mathsf{y},\mathsf{z}),\cn\dep(\mathsf{x},\mathsf{z})\vdash\bot$, which, 
by the translation given in \Cref{sigma_pi_df_ind} (see also Formula (\ref{neg_dep_frm})), 
is equivalent to
\[
\dep(\mathsf{x},\mathsf{y}),\dep(\mathsf{y},\mathsf{z}),\exists \mathsf{w}_1 \mathsf{v}_1\exists \mathsf{w}_2 \mathsf{v}_2\big( \mathsf{w}_1  \mathsf{v}_1\subseteq \mathsf{x} \mathsf{z}\,\wedge\,  \mathsf{w}_2  \mathsf{v}_2\subseteq \mathsf{x} \mathsf{z}\,\wedge\,  \mathsf{w}_1= \mathsf{w}_2\,\wedge\, \mathsf{v}_1\neq  \mathsf{v}_2  \big)\vdash\bot.
\]
By $\subseteq$\textsf{W} and $\subseteq$\textsf{Pro}, it is sufficient to derive 
\begin{align*}
\dep(\mathsf{x},\mathsf{y}),\dep(\mathsf{y},\mathsf{z}),&\exists \mathsf{w}_1  \mathsf{u}_1\mathsf{v}_1\exists \mathsf{w}_2  \mathsf{u}_2\mathsf{v}_2\\
&\big( \mathsf{w}_1  \mathsf{u}_1 \mathsf{v}_1\subseteq \mathsf{x}  \mathsf{y}\mathsf{z}\,\wedge\,  \mathsf{w}_2  \mathsf{u}_2 \mathsf{v}_2\subseteq \mathsf{x} \mathsf{y} \mathsf{z}\,\wedge\,  \mathsf{w}_1= \mathsf{w}_2\,\wedge\, \mathsf{v}_1\neq  \mathsf{v}_2  \big)\vdash\bot.
\end{align*}
Further, by $\exists$\textsf{E}, it suffices to prove
\begin{equation}\label{arms_deriv}
\dep(\mathsf{x},\mathsf{y}),\dep(\mathsf{y},\mathsf{z}), \mathsf{w}_1 \mathsf{u}_1\mathsf{v}_1\subseteq \mathsf{x}\mathsf{y}\mathsf{z}, \mathsf{w}_2 \mathsf{u}_2\mathsf{v}_2\subseteq \mathsf{x}\mathsf{y}\mathsf{z}\vdash  \mathsf{w}_1=\mathsf{w}_2\to \mathsf{v}_1=\mathsf{v}_2.
\end{equation}

First, note that since $\vdash\mathsf{y}=\mathsf{y}$, by $\subseteq$\textsf{Cmp} we have 
\begin{equation}\label{arms_deriv_eq1}
\mathsf{u}\mathsf{u}'\subseteq \mathsf{y}\mathsf{y}\vdash\mathsf{u}=\mathsf{u}'.
\end{equation}
 Now, we derive
\begin{align*}
&\mathsf{y}\perp_{\mathsf{x}}\mathsf{y},\mathsf{w}_1 \mathsf{u}_1 \mathsf{v}_1\subseteq \mathsf{x}\mathsf{y}\mathsf{z}, \mathsf{w}_2 \mathsf{u}_2\mathsf{v}_2\subseteq \mathsf{x}\mathsf{y}\mathsf{z}\\
\vdash~&\exists\mathsf{w}\mathsf{u}\mathsf{u}'\mathsf{v}\big(\mathsf{w}\mathsf{u}\mathsf{u}'\mathsf{v}\subseteq\mathsf{x}\mathsf{y}\mathsf{y}\mathsf{z}\wedge(\mathsf{w}_1=\mathsf{w}_2\to \mathsf{w}\mathsf{u}\mathsf{u}'=\mathsf{w}_1\mathsf{u}_1\mathsf{u}_2)\big)\tag{by the generalized $\perp$\textsf{E}}\\
\vdash~&\mathsf{w}_1=\mathsf{w}_2\to  \mathsf{u}_1=\mathsf{u}_2.\tag{by $\subseteq$\textsf{Pro} and (\ref{arms_deriv_eq1})}
\end{align*}
Similarly, we can also derive
\[\dep(\mathsf{y},\mathsf{z}),\mathsf{w}_1 \mathsf{u}_1\mathsf{v}_1\subseteq \mathsf{x}\mathsf{y}\mathsf{z}, \mathsf{w}_2 \mathsf{u}_2\mathsf{v}_2\subseteq \mathsf{x}\mathsf{y}\mathsf{z}\vdash \mathsf{u}_1=\mathsf{u}_2\to \mathsf{v}_1=\mathsf{v}_2.\]
Hence, (\ref{arms_deriv}) follows (from the usual rules for first-order formulas that the system of \cite{Hannula_fo_ind_13} contains).
\end{proof}

Next, we derive the Geiger-Paz-Pearl axioms as follows.

\begin{exmp}
The following clauses, known as the Geiger-Paz-Pearl axioms \cite{Geiger-Paz-Pearl_1991}, are derivable in the extended system of \Ind.
\begin{enumerate}[(1)]
\item $\mathsf{x}\perp\mathsf{y}~\vdash\mathsf{y}\perp\mathsf{x}$
\item $\mathsf{x}\perp\mathsf{y}~\vdash\mathsf{z}\perp\mathsf{y}$, where $\mathsf{z}$ is a subsequence of $\mathsf{x}$.
\item $\mathsf{x}\perp\mathsf{y}~\vdash\mathsf{u}\perp\mathsf{v}$, where $\mathsf{u}$ and $\mathsf{v}$ are  permutations of $\mathsf{x}$ and  $\mathsf{y}$, respectively
\item $\mathsf{x}\perp \mathsf{y},~\mathsf{x}\mathsf{y}\perp\mathsf{z}~\vdash\mathsf{x}\perp \mathsf{y}\mathsf{z}$.
\end{enumerate}
\end{exmp}
\begin{proof}
We only give the detailed derivation for item (4). By the rule \cn\!\textsf{E}, it suffices to derive $\mathsf{x}\perp \mathsf{y},\mathsf{x}\mathsf{y}\perp\mathsf{z},\cn(\mathsf{x}\perp \mathsf{y}\mathsf{z})\vdash\bot$. Instead of applying \Cref{sigma_pi_df_ind}, 
we will now use the more succinct definition of $\cn(\mathsf{x}\perp \mathsf{y}\mathsf{z})$ as given in Equation (\ref{neg_ind_smpdf}), and the desired clause is then equivalent to
\begin{equation*}
\begin{split}
&\mathsf{x}\perp \mathsf{y},\mathsf{x}\mathsf{y}\perp\mathsf{z},\exists^1 \mathsf{w} \mathsf{u}\mathsf{v}( \mathsf{w}\subseteq \mathsf{x}\,\wedge \,  \mathsf{u}\mathsf{v}\subseteq \mathsf{y}\mathsf{z}\,\wedge\, \mathsf{w} \mathsf{u}\mathsf{v}\neq \mathsf{x}\mathsf{y}\mathsf{z} )\vdash\bot.
\end{split}
\end{equation*}
By $\exists$\textsf{E}, it suffices to derive
\begin{equation}\label{gpp_deriv}
\Gamma,\mathsf{x}\perp \mathsf{y},\mathsf{x}\mathsf{y}\perp\mathsf{z}\,, \mathsf{w}\subseteq \mathsf{x}\, , \mathsf{u}\mathsf{v}\subseteq \mathsf{y}\mathsf{z}\, , \mathsf{w} \mathsf{u}\mathsf{v}\neq \mathsf{x}\mathsf{y}\mathsf{z}\vdash\bot,
\end{equation}
where $\Gamma=\{\dep(o)\mid o\text{ is a variable from }\mathsf{w}\mathsf{u}\mathsf{v}\}$.

First, we derive by (the general version of) $\perp$\textsf{E} that
\begin{equation}\label{gpp_deriv_1}
\mathsf{x}\perp \mathsf{y},\,\mathsf{w}\subseteq \mathsf{x},\,  \mathsf{u}\mathsf{v}\subseteq \mathsf{y}\mathsf{z}\vdash\exists \mathsf{w}_1 \mathsf{u}_1(\mathsf{w}_1 \mathsf{u}_1\subseteq \mathsf{x}\mathsf{y}\wedge \mathsf{w}_1 \mathsf{u}_1=\mathsf{w}\mathsf{u}).
\end{equation}
By $\perp$\textsf{E} again, we derive
\begin{equation}\label{gpp_deriv_2}
\mathsf{xy}\perp \mathsf{z},\,\mathsf{w}_1 \mathsf{u}_1\subseteq \mathsf{x}\mathsf{y},\,  \mathsf{u}\mathsf{v}\subseteq \mathsf{y}\mathsf{z}\vdash\exists \mathsf{w}_2\mathsf{u}_2\mathsf{v}_2(\mathsf{w}_2 \mathsf{u}_2\mathsf{v}_2\subseteq \mathsf{x}\mathsf{y}\mathsf{z}\wedge \mathsf{w}_2 \mathsf{u}_2\mathsf{v}_2=\mathsf{w}_1\mathsf{u}_1\mathsf{v}).
\end{equation}
Putting (\ref{gpp_deriv_1}) and (\ref{gpp_deriv_2}) together, by $\exists$\textsf{E} we obtain
\[
\mathsf{x}\perp \mathsf{y},\mathsf{x}\mathsf{y}\perp\mathsf{z}\,, \mathsf{w}\subseteq \mathsf{x}\, , \mathsf{u}\mathsf{v}\subseteq \mathsf{y}\mathsf{z}\vdash\exists \mathsf{w}_2 \mathsf{u}_2\mathsf{v}_2(\mathsf{w}_2 \mathsf{u}_2\mathsf{v}_2\subseteq \mathsf{x}\mathsf{y}\mathsf{z}\wedge \mathsf{w}_2 \mathsf{u}_2\mathsf{v}_2=\mathsf{w}\mathsf{u}\mathsf{v})
\vdash \mathsf{w}\mathsf{u}\mathsf{v}\subseteq \mathsf{x}\mathsf{y}\mathsf{z}.
\]

To derive (\ref{gpp_deriv}) it is now sufficient to derive
\begin{equation}\label{gpp_deriv_7}
\Gamma, \, \mathsf{w}\mathsf{u}\mathsf{v}\subseteq \mathsf{x}\mathsf{y}\mathsf{z},\,\mathsf{w}\mathsf{u}\mathsf{v}\neq \mathsf{x}\mathsf{y}\mathsf{z}\vdash\bot.
\end{equation}
First, by $\subseteq$\textsf{W}, we have $\mathsf{w}\mathsf{u}\mathsf{v}\subseteq \mathsf{x}\mathsf{y}\mathsf{z}\vdash\exists \mathsf{w}'\mathsf{u}'\mathsf{v}'(\mathsf{w}'\mathsf{u}'\mathsf{v}'\mathsf{w}\mathsf{u}\mathsf{v}\subseteq \mathsf{w}\mathsf{u}\mathsf{v}\mathsf{x}\mathsf{y}\mathsf{z})$. Moreover, by $\subseteq$\textsf{Cmp}, we have $\mathsf{w}'\mathsf{u}'\mathsf{v}'\mathsf{w}\mathsf{u}\mathsf{v}\subseteq \mathsf{w}\mathsf{u}\mathsf{v}\mathsf{x}\mathsf{y}\mathsf{z},\mathsf{w}\mathsf{u}\mathsf{v}\neq \mathsf{x}\mathsf{y}\mathsf{z}\vdash \mathsf{w}'\mathsf{u}'\mathsf{v}'\neq \mathsf{w}\mathsf{u}\mathsf{v}$. Putting these together,  (\ref{gpp_deriv_7}) is  reduced, by  $\exists$\textsf{E}, to 
\begin{equation*}
\Gamma, \, \mathsf{w}'\mathsf{u}'\mathsf{v}'\mathsf{w}\mathsf{u}\mathsf{v}\subseteq \mathsf{w}\mathsf{u}\mathsf{v}\mathsf{x}\mathsf{y}\mathsf{z},\,\mathsf{w}'\mathsf{u}'\mathsf{v}'\neq \mathsf{w}\mathsf{u}\mathsf{v}\vdash\bot,
\end{equation*}
and by $\subseteq$\textsf{Pro}, further  to
\begin{equation}\label{gpp_deriv_8}
\Gamma, \, \mathsf{w}'\mathsf{u}'\mathsf{v}'\subseteq \mathsf{w}\mathsf{u}\mathsf{v},\,\mathsf{w}'\mathsf{u}'\mathsf{v}'\neq \mathsf{w}\mathsf{u}\mathsf{v}\vdash\bot.
\end{equation}

Now, for an arbitrary variable $o$ from the sequence $\mathsf{wuv}$, we derive the following:
\begin{align*}
o\perp o,\, o\subseteq o\, , \mathsf{w}'\mathsf{u}'\mathsf{v}'\subseteq \mathsf{w}\mathsf{u}\mathsf{v}&\vdash \exists pq(pq\subseteq oo\wedge pq=oo')\tag{by $\perp$\textsf{E}}\\
&\vdash o=o'.\tag{by (\ref{arms_deriv_eq1})}
\end{align*}
Since $\vdash o\subseteq o$ holds by $\subseteq$\textsf{Id}, we conclude that $o\perp o,\mathsf{w}'\mathsf{u}'\mathsf{v}'\subseteq \mathsf{w}\mathsf{u}\mathsf{v}\vdash o=o'$ for any variable $o$ from $\mathsf{wuv}$. Hence, we obtain $\Gamma, \, \mathsf{w}'\mathsf{u}'\mathsf{v}'\subseteq \mathsf{w}\mathsf{u}\mathsf{v}\vdash \mathsf{w}'\mathsf{u}'\mathsf{v}'=\mathsf{w}\mathsf{u}\mathsf{v}$, from which (\ref{gpp_deriv_8}) follows.
\end{proof}

\section{Concluding remarks}


In this paper, we have extended the systems of natural deduction of dependence and independence logic defined in \cite{Axiom_fo_d_KV} and \cite{Hannula_fo_ind_13} and obtained complete axiomatizations of the negatable consequences in these logics. We have also given a characterization of negatable formulas in \Ind and negatable sentences in \D. Determining whether a formula of \Ind or \D is negatable is an undecidable problem. Nevertheless, we identified an interesting class of negatable formulas, the Boolean and weak quantifier closure of the class of $\Sigma_{n,\mathsf{k}}^R$ and $\Pi_{n,\mathsf{k}}^R$  atoms. First-order formulas, dependence and independence atoms belong to this class. 
We also gave derivations of Armstrong's axioms and the Geiger-Paz-Pearl axioms in our extended system of \Ind.


The results of this paper can be generalized in two directions. The first direction is to identify other negatable formulas than those  in the Boolean and weak quantifier closure  of the set of atoms from our hierarchy. The other direction is to analyze the $\Sigma_{n,\mathsf{k}}^R$ and $\Pi_{n,\mathsf{k}}^R$  atoms in more detail. As we saw in \Cref{neg_atom_example}, 
first-order formulas and the atoms of dependence and independence belong  to the $\Pi_1$ or $\Pi_2$ level. 
It is easy to verify that $\Pi_{1,\mathsf{k}}^R$ atoms (including first-order formulas and dependence atoms) are closed downward, $\Sigma_{1,\mathsf{k}}^R$ atoms are closed upward, and $\Pi_{2,\mathsf{k}}^R$ atoms (including inclusion atoms) are closed under unions. First-order logic extended with upward closed atoms is shown in \cite{Galliani2015} to be equivalent to first-order logic. Adding other such atoms to first-order logic results in many new  logics that are less expressive than $\Sigma^1_1$ or independence logic and possibly stronger than first-order logic. These logics are potentially interesting, because, among other properties, by the argument of this paper, the negatable consequences in these logics can in principle be effectively axiomatized.



\section*{Acknowledgements} 
The author is in debted to Eric Pacuit for our discussions on formalizing Arrow's Theorem in independence logic, which in the end  led to the results of this paper unexpectedly. The author would also like to thank Juha Kontinen, Raine R\"{o}nnholm and Jouko V\"{a}\"{a}n\"{a}nen for stimulating discussions concerning the technical details of the paper, and an anonymous referee for useful suggestions concerning the presentation of the results.




\vspace{2\baselineskip}

\noindent\textbf{References}

\bibliographystyle{acm}

\clearpage

\end{document}